 \newtheorem{remark}{Remark}
 \newtheorem{theorem}[remark]{Theorem}
 \newtheorem{corollary}[remark]{Corollary}
\title{Dominating the direct product of two graphs through total Roman strategies\thanks{The third author was partially supported by Slovenian research agency under the grants P1-0297, J1-1693 and J1-9109.}}
\author{Abel Cabrera Mart\'inez$^{(1)}$, Dorota Kuziak$^{(2)}$, Iztok Peterin$^{(3,4)}$ and Ismael G. Yero$^{(5)}$\\
\\
$^{(1)}$ {\small Departament d'Enginyeria Inform\`atica i Matem\`atiques, Universitat Rovira i Virgili}\\
{\small\it abel.cabrera\@@urv.cat}\\
$^{(2)}${\small Departamento de Estad\'istica e Investigaci\'on Operativa, Universidad de C\'adiz}\\
{\small\it dorota.kuziak\@@uca.es}\\
$^{(3)}${\small Faculty of Electrical Engineering and Computer Science, University of Maribor}\\
$^{(4)}${\small Institute of Mathematics, Physics and Mechanics, Ljubljana, Slovenia}\\
{\small\it iztok.peterin\@@um.si} \\
$^{(5)}${\small Departamento de Matem\'aticas, Universidad de C\'adiz}\\
{\small\it ismael.gonzalez\@@uca.es}
}
\date{}
\begin{document}

\maketitle

\begin{abstract}
Given a graph $G$ without isolated vertices, a total Roman dominating function for $G$ is a function $f : V(G)\rightarrow \{0,1,2\}$ such that every vertex with label 0 is adjacent to a vertex with label 2, and the set of vertices with positive labels induces a graph of minimum degree at least one. The total Roman domination number $\gamma_{tR}(G)$ of $G$ is the smallest possible value of $\sum_{v\in V(G)}f(v)$ among all total Roman dominating functions $f$. The total Roman domination number of the direct product $G\times H$ of the graphs $G$ and $H$ is studied in this work. Specifically, several relationships, in the shape of upper and lower bounds, between $\gamma_{tR}(G\times H)$ and some classical domination parameters for the factors are given. Characterizations of the direct product graphs $G\times H$ achieving small values ($\le 7$) for $\gamma_{tR}(G\times H)$ are presented, and exact values for $\gamma_{tR}(G\times H)$ are deduced, while considering various specific direct product classes.
\end{abstract}

{\it Keywords:} Total Roman domination; direct product graphs.

{\it AMS Subject Classification Numbers:}   05C69, 05C76.

\section{Introduction}

The present investigation is devoted to describe a number of contributions to the theory of total Roman dominating functions while dealing with the direct (or tensor or Kronecker) product of two graphs. Studies concerning parameters in relation to domination in graphs are very frequently present in the last recent years. This might probably be caused by the popularity of some classical problems, like for instance Vizing's conjecture \cite{vizing-1,vizing-2} for domination in Cartesian products\footnote{The conjecture claims that the cardinality of the smallest dominating set of the Cartesian product of two graphs is at least equal to the product of the domination numbers of the factor graphs involved in the product.}. See \cite{survey-vizing}, for a survey and recent results concerning this conjecture. Several other problems concerning domination parameters in product graphs have occupied the mind of a significant number of investigators. Works of that type concerning direct product graphs are \cite{bresar2007,Defant2018,Mekis2010,Shiu2012}.

The (total) Roman domination variants are among the most popular topics of domination in graphs. Both versions have had their birth in connection with some defense strategies related to the ancient Roman Empire (see \cite{Revelle2000,Stewart1999}). Studies on (total) Roman domination in product graphs have not escaped from the researchers attention. For instance, \cite{Cabrera,Campanelli,Sumenjak,Yero} are aimed to these goals, although no works appear that considers the Roman domination parameters for the case of direct products. We hence continue with giving new contributions to the theory of parameters related to domination in graph products, specifically we center our attention on the total Roman domination version for the case of the direct product of graphs.

In this work, we consider simple graphs without vertices of degree 0. For a map $f:V(G)\rightarrow \{0,1,2\}$ and a set of vertices $S\subseteq V(G)$, the \emph{weight} of $S$ under $f$ is $f(S)=\sum_{v\in S}f(v)$. Moreover, the \emph{weight} of $f$ is $\omega(f)=f(V(G))$. Since the function $f$ generates three sets $V_0,V_1,V_2$ such that $V_i=\{v\in V(G)\,:\,f(v)=i\}$, $i\in\{0,1,2\}$, we shall write $f=(V_0,V_1,V_2)$.

A function $f=(V_0,V_1,V_2)$ is known to be a \emph{Roman dominating function} on $G$ whenever all vertices $v\in V_0$ have at least one neighbor $u\in V_2$. In connection with this, the parameter of $G$ called \emph{Roman domination number} stands for the least weight among all functions that are proved to be Roman dominating on $G$. This parameter is usually represented as $\gamma_R(G)$. Such concepts in the theory of graphs were formally introduced in \cite{Cockayne2004}, motivated in part by some domination strategies which arose from the antique Roman Empire (see for instance \cite{Revelle2000,Stewart1999}). A Roman dominating function $f=(V_0,V_1,V_2)$ is called a \emph{total Roman dominating function} if  $V_1\cup V_2$ induces a graph without vertices of degree 0. The \emph{total Roman domination number} of $G$ stands for the minimum possible weight among all total Roman dominating functions on $G$. This parameter is pointed out as $\gamma_{tR}(G)$. By a $\gamma_{tR}(G)$-\emph{function} we mean a total Roman dominating function whose weight equals precisely $\gamma_{tR}(G)$. These concepts of total Roman domination were first introduced in \cite{Liu2013} by using some more general settings. The concepts were further specifically introduced and firstly well studied in \cite{Abdollahzadeh2016}.

A set $D=\{v_1,\dots,v_r\}\subset V(G)$ is called a \emph{packing set} of $G$, if $N[v_i]\cap N[v_j]=\emptyset$ for every two different integers $i,j\in \{1,\dots,r\}$. The \emph{packing number} of $G$ is the cardinality of a largest possible packing set of $G$. We represent such cardinality as $\rho(G)$. A packing set induces a subgraph of maximum degree 0, \emph{i.e.}, a graph without edges.
If we substitute the closed neighborhoods with open neighborhood in the definition above, then the concept of \emph{open packing sets} arises. Hence, $D$ is considered to be an \emph{open packing set} whenever $N(v_i)\cap N(v_j)=\emptyset$ for any two distinct $i,j\in \{1,\dots,r\}$. Similarly, the parameter called \emph{open packing number} of $G$ is the cardinality of the largest possible open packing set of $G$. We write this cardinality by using the notation $\rho_o(G)$. We recall that any open packing set represents a set of vertices of the graph which induces a graph having the maximum degree equal to one, and clearly, it could have some vertices whose degree equals zero.


A set $D\subseteq V(G)$ is \emph{total dominating} if all the vertices of the whole graph $G$ have at least a neighbor in the set $D$. The cardinality of the smallest total dominating set of $G$ is known as the \emph{total domination number} of $G$. This cardinality is then represented as $\gamma_t(G)$. A set being total dominating and having cardinality $\gamma_t(G)$ is said to be a $\gamma_t(G)$-\emph{set}. The graph $G$ is called an \emph{efficient open domination graph}, if there is a total dominating set of $G$ which is simultaneously also an open packing.

The \emph{direct product} (also known as tensor product or Kronecker product) of two graphs $G$ and $H$ is the graph denoted by $G\times H$ whose vertex set is given by $V(G\times H)=V(G)\times V(H)$ and the edge set is the Cartesian product of the vertex sets of the factors. That is, $E(G\times H)=\{(g,h)(g',h'):gg'\in E(G), hh'\in E(H)\}$. In Figure~\ref{figure} we show the graph $P_6\times P_6$. As usual, we call the map $p_G:(g,h)\mapsto g$ a \emph{projection} of $G\times H$ onto $G$ and the map $p_H:(g,h)\mapsto h$ a projection of $G\times H$ onto $H$. The set $G^h=\{(g,h):g\in V(G)\}$ is called a $G$-\emph{layer} through $h\in V(H)$ and contains all vertices that project to $h$. An $H$-\emph{layer} $H^g=\{(g,h):h\in V(H)\}$ through $g\in V(G)$ is similarly defined. Note that vertices from a $G$-layer and from an $H^g$-layer form independent sets of $G\times H$.

The direct product is a graph product (see the exhausting monograph on graph products \cite{ImKl}) in categorical sense, as the end vertices of every edge from $G\times H$ project to end vertices of edges in both factors. Consequently, one of the most natural products among all graph products is precisely the direct product, but on the other hand, this also makes this product the most elusive one in many perspectives. So, the connectedness of both factors $G$ and $H$ does not imply the connectedness of the product $G\times H$. (Notice that $P_6\times P_6$ from Figure \ref{figure} is not connected.) To achieve this, one of the factors must also be non-bipartite, see Theorem 5.9 in \cite{ImKl}. One reason for this is that layers form independent sets in $G\times H$. On the other side, the open neighborhoods behave ``nice'', with respect to the factors, while making a direct product based on the fact
\begin{equation}
N_{G\times H}(g,h)=N_G(g)\times N_H(h). \label{neighb}
\end{equation}

\noindent Two different total Roman dominating functions on $P_6\times P_6$ are presented on Figure \ref{figure}.

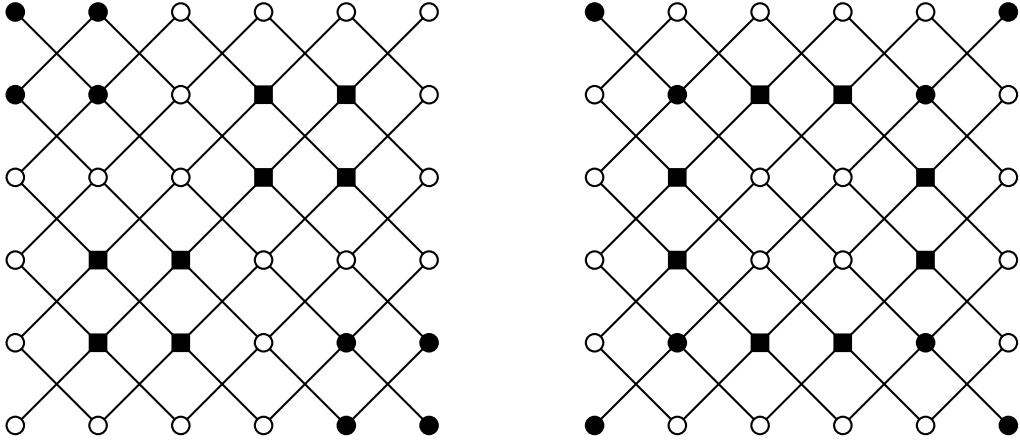
\begin{figure}[ht]
\begin{center}
\begin{tikzpicture}[scale=1.1,style=thick,x=1cm,y=1cm]
\def\vr{3pt} 

\path (0,0) coordinate (v11);
\path (1,0) coordinate (v21);
\path (2,0) coordinate (v31);
\path (3,0) coordinate (v41);
\path (4,0) coordinate (v51);
\path (5,0) coordinate (v61);

\path (0,1) coordinate (v12);
\path (1,1) coordinate (v22);
\path (2,1) coordinate (v32);
\path (3,1) coordinate (v42);
\path (4,1) coordinate (v52);
\path (5,1) coordinate (v62);

\path (0,2) coordinate (v13);
\path (1,2) coordinate (v23);
\path (2,2) coordinate (v33);
\path (3,2) coordinate (v43);
\path (4,2) coordinate (v53);
\path (5,2) coordinate (v63);

\path (0,3) coordinate (v14);
\path (1,3) coordinate (v24);
\path (2,3) coordinate (v34);
\path (3,3) coordinate (v44);
\path (4,3) coordinate (v54);
\path (5,3) coordinate (v64);

\path (0,4) coordinate (v15);
\path (1,4) coordinate (v25);
\path (2,4) coordinate (v35);
\path (3,4) coordinate (v45);
\path (4,4) coordinate (v55);
\path (5,4) coordinate (v65);

\path (0,5) coordinate (v16);
\path (1,5) coordinate (v26);
\path (2,5) coordinate (v36);
\path (3,5) coordinate (v46);
\path (4,5) coordinate (v56);
\path (5,5) coordinate (v66);

	\draw (v11)--(v66);
	\draw (v21)--(v65);
	\draw (v31)--(v64);
	\draw (v41)--(v63);
	\draw (v51)--(v62);
	\draw (v12)--(v21);
	\draw (v13)--(v31);
	\draw (v14)--(v41);
	\draw (v15)--(v51);
	\draw (v16)--(v61);
	
	\draw (v12)--(v56);
	\draw (v13)--(v46);
	\draw (v14)--(v36);
	\draw (v15)--(v26);
	\draw (v26)--(v62);
	\draw (v36)--(v63);
	\draw (v46)--(v64);
	\draw (v56)--(v65);

			\draw (v11)[fill=white] circle (\vr);
			\draw (v21)[fill=white] circle (\vr);
			\draw (v31)[fill=white] circle (\vr);
			\draw (v41)[fill=white] circle (\vr);
			\draw (v51)[fill=black] circle (\vr);
			\draw (v61)[fill=black] circle (\vr);
			\draw (v12)[fill=white] circle (\vr);
			\draw (v22)[fill=black] (1.1,0.9) rectangle (0.9,1.1);
			\draw (v32)[fill=black] (2.1,0.9) rectangle (1.9,1.1);
			\draw (v42)[fill=white] circle (\vr);
			\draw (v52)[fill=black] circle (\vr);
			\draw (v62)[fill=black] circle (\vr);
			\draw (v13)[fill=white] circle (\vr);
			\draw (v23)[fill=black] (1.1,1.9) rectangle (0.9,2.1);
			\draw (v33)[fill=black] (2.1,1.9) rectangle (1.9,2.1);
			\draw (v43)[fill=white] circle (\vr);
			\draw (v53)[fill=white] circle (\vr);
			\draw (v63)[fill=white] circle (\vr);
			\draw (v14)[fill=white] circle (\vr);
			\draw (v24)[fill=white] circle (\vr);
			\draw (v34)[fill=white] circle (\vr);
			\draw (v44)[fill=black] (3.1,2.9) rectangle (2.9,3.1);
			\draw (v54)[fill=black] (4.1,2.9) rectangle (3.9,3.1);
			\draw (v64)[fill=white] circle (\vr);
			\draw (v15)[fill=black] circle (\vr);
			\draw (v25)[fill=black] circle (\vr);
			\draw (v35)[fill=white] circle (\vr);
			\draw (v45)[fill=black] (3.1,3.9) rectangle (2.9,4.1);
			\draw (v55)[fill=black] (4.1,3.9) rectangle (3.9,4.1);
			\draw (v65)[fill=white] circle (\vr);
			\draw (v16)[fill=black] circle (\vr);
			\draw (v26)[fill=black] circle (\vr);
			\draw (v36)[fill=white] circle (\vr);
			\draw (v46)[fill=white] circle (\vr);
			\draw (v56)[fill=white] circle (\vr);
			\draw (v66)[fill=white] circle (\vr);


\path (7,0) coordinate (u11);
\path (8,0) coordinate (u21);
\path (9,0) coordinate (u31);
\path (10,0) coordinate (u41);
\path (11,0) coordinate (u51);
\path (12,0) coordinate (u61);

\path (7,1) coordinate (u12);
\path (8,1) coordinate (u22);
\path (9,1) coordinate (u32);
\path (10,1) coordinate (u42);
\path (11,1) coordinate (u52);
\path (12,1) coordinate (u62);

\path (7,2) coordinate (u13);
\path (8,2) coordinate (u23);
\path (9,2) coordinate (u33);
\path (10,2) coordinate (u43);
\path (11,2) coordinate (u53);
\path (12,2) coordinate (u63);

\path (7,3) coordinate (u14);
\path (8,3) coordinate (u24);
\path (9,3) coordinate (u34);
\path (10,3) coordinate (u44);
\path (11,3) coordinate (u54);
\path (12,3) coordinate (u64);

\path (7,4) coordinate (u15);
\path (8,4) coordinate (u25);
\path (9,4) coordinate (u35);
\path (10,4) coordinate (u45);
\path (11,4) coordinate (u55);
\path (12,4) coordinate (u65);

\path (7,5) coordinate (u16);
\path (8,5) coordinate (u26);
\path (9,5) coordinate (u36);
\path (10,5) coordinate (u46);
\path (11,5) coordinate (u56);
\path (12,5) coordinate (u66);

	\draw (u11)--(u66);
	\draw (u21)--(u65);
	\draw (u31)--(u64);
	\draw (u41)--(u63);
	\draw (u51)--(u62);
	\draw (u12)--(u21);
	\draw (u13)--(u31);
	\draw (u14)--(u41);
	\draw (u15)--(u51);
	\draw (u16)--(u61);
	
	\draw (u56)--(u65);
	\draw (u46)--(u64);
	\draw (u36)--(u63);
	\draw (u26)--(u62);
	\draw (u15)--(u26);
	\draw (u12)--(u56);
	\draw (u13)--(u46);
	\draw (u14)--(u36);
	
			\draw (u11)[fill=black] circle (\vr);
			\draw (u21)[fill=white] circle (\vr);
			\draw (u31)[fill=white] circle (\vr);
			\draw (u41)[fill=white] circle (\vr);
			\draw (u51)[fill=white] circle (\vr);
			\draw (u61)[fill=black] circle (\vr);
			\draw (u12)[fill=white] circle (\vr);
			\draw (u22)[fill=black] circle (\vr);
			\draw (u32)[fill=black] (9.1,0.9) rectangle (8.9,1.1);
			\draw (u42)[fill=black] (10.1,0.9) rectangle (9.9,1.1);
			\draw (u52)[fill=black] circle (\vr);
			\draw (u62)[fill=white] circle (\vr);
			\draw (u13)[fill=white] circle (\vr);
			\draw (u23)[fill=black] (8.1,1.9) rectangle (7.9,2.1);
			\draw (u33)[fill=white] circle (\vr);
			\draw (u43)[fill=white] circle (\vr);
			\draw (u53)[fill=black] (11.1,1.9) rectangle (10.9,2.1);
			\draw (u63)[fill=white] circle (\vr);
			\draw (u14)[fill=white] circle (\vr);
			\draw (u24)[fill=black] (8.1,2.9) rectangle (7.9,3.1);
			\draw (u34)[fill=white] circle (\vr);
			\draw (u44)[fill=white] circle (\vr);
			\draw (u54)[fill=black] (11.1,2.9) rectangle (10.9,3.1);
			\draw (u64)[fill=white] circle (\vr);
			\draw (u15)[fill=white] circle (\vr);
			\draw (u25)[fill=black] circle (\vr);
			\draw (u35)[fill=black] (9.1,3.9) rectangle (8.9,4.1);
			\draw (u45)[fill=black] (10.1,3.9) rectangle (9.9,4.1);
			\draw (u55)[fill=black] circle (\vr);
			\draw (u65)[fill=white] circle (\vr);
			\draw (u16)[fill=black] circle (\vr);
			\draw (u26)[fill=white] circle (\vr);
			\draw (u36)[fill=white] circle (\vr);
			\draw (u46)[fill=white] circle (\vr);
			\draw (u56)[fill=white] circle (\vr);
			\draw (u66)[fill=black] circle (\vr);

\end{tikzpicture}
\end{center}
\caption{Two total Roman dominating functions on $P_6\times P_6$ where vertices in $V_0$ are white circles, vertices in $V_1$ are black circles and black squares represent vertices in $V_2$.}\label{figure}
\end{figure}

The \emph{degree} $\delta_G(v)$ of the vertex $v$ in $G$ is represented as the cardinality of the open neighborhood of $v$, that is $\delta_G(v)=|N_G(v)|$. The \emph{maximum degree} of a vertex in a graph $G$ is denoted by $\Delta(G)$. Clearly, $1\leq \Delta(G)\leq |V(G)|-1$ as we consider only simple graphs having no vertices of degree zero. A \emph{leaf} of $G$ is a vertex $v\in V(G)$ with degree $\delta_G(v)=1$ and in contrast, if $\delta_G(v)=|V(G)|-1$, then the vertex $v$ is called as \emph{universal vertex}. For the specific case of the direct product of two graphs $G$ and $H$, we recall that $\delta_{G\times H}(g,h)=\delta_G(g)\delta_H(h)$ and $\Delta(G\times H)=\Delta(G)\Delta(H)$ by (\ref{neighb}).

\section{General bounds}\label{section-general}

We start our exposition with some lower and upper bounds for $\gamma_{tR}(G\times H)$ which are mainly depending on $\rho(G)$, $\rho(H)$, $\gamma_{tR}(G)$ and $\gamma_{tR}(H)$.

\begin{theorem}
\label{th:general bounds}
If $g=(A_0,A_1,A_2)$ is a $\gamma_{tR}(G)$-function (with maximum cardinality of $A_2$) and $h=(B_0,B_1,B_2)$ is a $\gamma_{tR}(H)$-function (with maximum cardinality of $B_2$), then
$$\max\{\rho(H)\gamma_{tR}(G),\rho(G)\gamma_{tR}(H)\}\le \gamma_{tR}(G\times H)\le \gamma_{tR}(H)\gamma_{tR}(G)-2|A_2||B_2|.$$
\end{theorem}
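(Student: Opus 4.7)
The plan for the upper bound is to combine the two given functions via the product-like assignment $\phi : V(G\times H) \to \{0,1,2\}$ defined by $\phi(x,y) = \min\{2,\, g(x)h(y)\}$. A direct case analysis on the four possible values of $g(x)h(y)\in\{0,1,2,4\}$ shows that $\phi(x,y)=1$ exactly on $A_1\times B_1$ and $\phi(x,y)=2$ exactly on $(A_1\times B_2)\cup(A_2\times B_1)\cup(A_2\times B_2)$, so that
$$\omega(\phi) \;=\; |A_1||B_1| + 2\bigl(|A_1||B_2|+|A_2||B_1|+|A_2||B_2|\bigr) \;=\; \gamma_{tR}(G)\gamma_{tR}(H) - 2|A_2||B_2|.$$
To verify that $\phi$ is a total Roman dominating function I would use (\ref{neighb}) to reduce both conditions to coordinatewise claims on $g$ and $h$: if $\phi(x,y)=0$ with, say, $g(x)=0$, then a vertex $x'\in N_G(x)$ with $g(x')=2$ paired with either a vertex $y'\in N_H(y)$ having $h(y')=2$ (when $h(y)=0$) or any $y'\in N_H(y)$ with $h(y')\geq 1$ (when $h(y)\geq 1$) gives $\phi(x',y')=2$; if $\phi(x,y)\geq 1$ then $g(x),h(y)\geq 1$, and the totality of $g$ and $h$ provides the required positively labeled neighbors of $x$ and $y$ whose pair is the desired neighbor of $(x,y)$.

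For the lower bound it suffices, by symmetry, to establish $\rho(H)\gamma_{tR}(G) \leq \gamma_{tR}(G\times H)$. Fix a $\gamma_{tR}(G\times H)$-function $f=(V_0,V_1,V_2)$ and a maximum packing $P=\{y_1,\dots,y_k\}$ of $H$ with $k=\rho(H)$. For each index $i$ I define $g_i : V(G) \to \{0,1,2\}$ by
$$g_i(x) = \max\{f(x,z) : z \in N_H[y_i]\}.$$
Granting that each $g_i$ is a total Roman dominating function on $G$ (so $\omega(g_i)\geq \gamma_{tR}(G)$), the desired inequality follows from
$$k\,\gamma_{tR}(G) \;\leq\; \sum_{i=1}^{k} \omega(g_i) \;\leq\; \sum_{i=1}^{k} \sum_{z \in N_H[y_i]} f(V(G)\times\{z\}) \;\leq\; \omega(f) \;=\; \gamma_{tR}(G\times H),$$
where the middle step uses the pointwise bound $g_i(x)\leq \sum_{z\in N_H[y_i]} f(x,z)$ and the last step uses the defining condition $N_H[y_i]\cap N_H[y_j]=\emptyset$ of a packing, so that each $z\in V(H)$ is counted at most once. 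The Roman property of $g_i$ is immediate: $g_i(x)=0$ forces $f(x,y_i)=0$, and the Roman property of $f$ at $(x,y_i)$ supplies a neighbor $(x',z')$ with $f(x',z')=2$ and $z'\in N_H(y_i)\subseteq N_H[y_i]$, whence $g_i(x')=2$.

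I expect the main obstacle to be the total condition for $g_i$. The tempting approach -- lifting a $V_1\cup V_2$-neighbor of some $(x,z)$ with $z\in N_H[y_i]$ and $f(x,z)\geq 1$ -- can produce a neighbor with $H$-coordinate in $N_H(z)$ at distance two from $y_i$, and thus outside $N_H[y_i]$. My plan to sidestep this is to anchor the argument at $(x,y_i)$ rather than at an arbitrary $z$: if $f(x,y_i)\geq 1$, the totality of $f$ at $(x,y_i)$ produces a neighbor $(x',z')$ with $f(x',z')\geq 1$ and $z'\in N_H(y_i)\subseteq N_H[y_i]$; if instead $f(x,y_i)=0$, the Roman condition at $(x,y_i)$ yields a neighbor $(x',z')$ with $f(x',z')=2$ and $z'\in N_H(y_i)$. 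In either case $g_i(x')\geq 1$, completing the verification; note that the same dichotomy rules out $g_i\equiv 0$, which is needed for $g_i$ to be a legitimate total Roman dominating function.
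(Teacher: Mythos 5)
Your proof is correct and follows essentially the same strategy as the paper: the function $\min\{2,g(x)h(y)\}$ is exactly the paper's labeling of $(A_2\times(B_1\cup B_2))\cup(A_1\times B_2)$ by $2$ and $A_1\times B_1$ by $1$, and the lower bound uses the same packing-ball restriction $\max\{f(\cdot,z):z\in N_H[y_i]\}$ with the same anchoring of the totality check at the packing center (the paper anchors at $(u_i,v)$; you anchor at $(x,y_i)$, which is the mirror image). No gaps.
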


\begin{proof}
We consider a function $f$ on $G\times H$ defined as follows. If $(u,v)\in (A_2\times (B_1\cup B_2))\cup (A_1\times B_2)$, then $f(u,v)=2$; if $(u,v)\in (A_1\times B_1)$, then $f(u,v)=1$; and $f(u,v)=0$ otherwise. If $f(u,v)\ge 1$, then since $g(u)\ge 1$ and $h(v)\ge 1$, there exist two vertices $u'\in N_G(u)$ and $v'\in N_H(v)$ such that $g(u')\ge 1$ and $h(v')\ge 1$. Thus, it follows $(u',v')\in N_{G\times H}(u,v)$ and $f(u',v')\ge 1$. Now, consider a vertex $(u,v)\in V(G\times H)$ such that $f(u,v)=0$. If $(u,v)\in A_0\times V(H)$, then there exist two vertices $u''\in N_G(u)$ and $v''\in N_H(v)$ such that $g(u'')=2$ and $h(v'')\geq 1$. Thus, it follows $(u'',v'')\in N_{G\times H}(u,v)$ and $f(u'',v'')=2$. Finally, if $(u,v)\in A_i\times B_0$ with $i\in \{1,2\}$, then a symmetrical argument to the above one produce a similar conclusion.

As a consequence, we deduce $f$ is a total Roman dominating function on the direct product $G\times H$, which leads to
\begin{align*}
  \gamma_{tR}(G\times H) & \le \omega(f) \\
                         & = 2|A_2\times B_2|+2|A_2\times B_1|+2|A_1\times B_2|+|A_1\times B_1| \\
                         & =(2|A_2|+|A_1|)(|B_2|+|B_1|)+|A_1||B_2| \\
                         & =(2|A_2|+|A_1|)(2|B_2|+|B_1|)-2|A_2||B_2|\\
                         & =\gamma_{tR}(G)\gamma_{tR}(H)-2|A_2||B_2|.
\end{align*}

Now, in order we deduce the lower bound, a $\gamma_{tR}(G\times H)$-function $f$ and a $\rho(G)$-set $S=\{u_1,\dots,u_{\rho(G)}\}$ are considered. Hence, for any integer $i\in \{1,\dots,\rho(G)\}$, we construct a function $h_i$ on $H$ as follows. Also, for any vertex $v\in V(H)$, $h_i(v)=\max\{f(u,v)\,:\,u\in N_G[u_i]\}$.

If $h_i(v)\ge 1$, then there is a vertex $(u,v)\in N_G[u_i]\times \{v\}$ for which $f(u,v)\ge 1$. If $f(u_i,v)=0$, then there exists a vertex $(x,y)\in N_G(u_i)\times N_H(v)$ such that $f(x,y)=2$ and $(x,y)\in N_{G\times H}(u_i,v)$. Moreover, note that in this case $h_i(y)=2$ and also that $y\in N_H(v)$. Now, if $f(u_i,v)\ge 1$, then there exists a vertex $(x',y')\in N_{G\times H}(u_i,v)$ such that $f(x',y')\ge 1$. In such situation, we similarly get $h_i(y')\ge 1$ and  $y'\in N_H(v)$.

On the other hand, if $h_i(v)=0$, then for every vertex $(u,v)\in N_G[u_i]\times \{v\}$ we have $f(u,v)=0$. Particularly, for the vertex $(u_i,v)$, there exists a vertex $(u_i',v')\in N_{G\times H}(u_i,v)$ with $v'\ne v$ and $f(u_i',v')=2$. Hence, for the vertex $v'\in V(H)$ it is satisfied $v'\in N_H(v)$ and $h_i(v')=2$.

As a consequence of these arguments, we deduce that $h_i$ is a total Roman dominating function on $H$ whose weight is less than or equal to  $f(N_G[u_i]\times V(H))$, \emph{i.e.}, $\gamma_{tR}(H)\le f(N_G[u_i]\times V(H))$. Hence, we have the following.
$$\gamma_{tR}(G\times H) \ge \sum_{i=1}^{\rho(G)}f(N_G[u_i]\times V(H))
                          \ge \sum_{i=1}^{\rho(G)} \gamma_{tR}(H)
                          = \rho(G)\gamma_{tR}(H).$$
By the symmetry of the product, we also deduce that $\gamma_{tR}(G\times H)\ge \rho(H)\gamma_{tR}(G)$, and this ends the proof for the case of the lower bound.
\end{proof}

Since every graph of order at least three contains at least one total Roman dominating function whose weight equals the total Roman domination number and at least one vertex labeled two, the following result is directly deduced from the result above.

\begin{corollary}\label{cor1}
For every graphs $G$ and $H$ without vertices of degree 0 and of orders at least three,
$$\gamma_{tR}(G\times H)\le \gamma_{tR}(G)\gamma_{tR}(H)-2.$$
\end{corollary}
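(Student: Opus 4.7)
The plan is straightforward: apply Theorem~\ref{th:general bounds} and establish that under the hypotheses (no isolated vertices, order at least $3$), the term $2|A_2||B_2|$ appearing in the upper bound there is at least $2$.

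First, I would invoke Theorem~\ref{th:general bounds} with $\gamma_{tR}(G)$- and $\gamma_{tR}(H)$-functions $g=(A_0,A_1,A_2)$ and $h=(B_0,B_1,B_2)$ chosen so that $|A_2|$ and $|B_2|$ are as large as possible. The theorem then supplies
$$\gamma_{tR}(G\times H)\le \gamma_{tR}(G)\gamma_{tR}(H)-2|A_2||B_2|,$$
and the corollary reduces to the following auxiliary claim: any graph $K$ without isolated vertices and of order at least $3$ admits a $\gamma_{tR}(K)$-function having at least one vertex labelled $2$. Applied to both factors, this yields $|A_2|,|B_2|\ge 1$, so $2|A_2||B_2|\ge 2$, which is exactly what the corollary asserts.

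To prove the auxiliary claim I would argue in two steps. First, show that $\gamma_{tR}(K)\ge 3$: any function of weight at most $2$ either has a singleton $V_2$ (so $V_1\cup V_2$ induces a subgraph with a vertex of degree $0$) or has $V_2=\emptyset$ and $|V_1|\le 2$, which leaves some vertex in $V_0$ without a neighbour of label $2$ since $|V(K)|\ge 3$. Second, given a $\gamma_{tR}(K)$-function $f=(V_0,V_1,V_2)$ with $V_2=\emptyset$, observe that $V_0=\emptyset$ is forced (otherwise its vertices would be undominated), so $V_1=V(K)$ and $\omega(f)=|V(K)|$; from such $f$ I would construct a competing function $f'$ of the same weight with $V_2'\neq \emptyset$, by promoting a suitably chosen vertex $v$ of degree at least $2$ to label $2$ and simultaneously demoting one of its neighbours $b$ from label $1$ to label $0$.

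The only delicate point is the choice of $b$ in this last step: after the demotion, the induced subgraph on $V_1'\cup V_2'=V(K)\setminus\{b\}$ must still have minimum degree at least $1$, which forces $b$ not to be the unique neighbour of any other vertex of $K$. Checking that such a pair $(v,b)$ can always be selected under the stated hypotheses is the main obstacle in the argument; once this existence is verified the auxiliary claim is proved, and the corollary then follows at once from Theorem~\ref{th:general bounds}.
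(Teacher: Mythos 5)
Your overall strategy is exactly the paper's: the paper derives the corollary from Theorem~\ref{th:general bounds} together with the (unproved) assertion that every graph of order at least three admits a $\gamma_{tR}$-function with at least one vertex labelled $2$, which is precisely your auxiliary claim. So the only thing to examine is the step you yourself flag as unresolved, namely whether the pair $(v,b)$ can always be chosen.

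Under the hypotheses as literally stated it cannot, and in fact the auxiliary claim is false there: the paper never assumes connectivity, and for $K=K_2\cup K_2$ (order $4$, no isolated vertices) the all-ones function is the unique $\gamma_{tR}(K)$-function, because putting a $2$ on any vertex forces its unique neighbour to receive a positive label and drives the weight of that component up to $3$; moreover there is no vertex of degree at least $2$ to promote in the first place. This is not merely an obstacle to your construction but a counterexample to the corollary itself: $\gamma_{tR}\bigl((K_2\cup K_2)\times(K_2\cup K_2)\bigr)=\gamma_{tR}(8K_2)=16>14=4\cdot 4-2$. So connectivity (or at least the exclusion of unions of $K_2$-components) has to be assumed implicitly. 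Once $K$ is connected of order at least three, your selection always succeeds and is easy to make explicit: if $K$ has a leaf $\ell$, take $v$ to be its support vertex, which has degree at least $2$ since $|V(K)|\ge 3$, and take $b=\ell$; no vertex has $\ell$ as its unique neighbour, because the unique neighbour of $\ell$ is $v$ and $\delta_K(v)\ge 2$. If $K$ has no leaves, then every vertex has degree at least $2$, no vertex is the unique neighbour of another, and any edge $vb$ works. With that case analysis added (and the connectivity hypothesis made explicit), your argument is complete and in fact supplies the justification that the paper omits.
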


Notice that we can avoid the remarks about maximum cardinality of $A_2$ and $B_2$ in Theorem \ref{th:general bounds}. However, the bound is better if we take a $\gamma_{tR}(G)$-function and a $\gamma_{tR}(H)$-function with maximum cardinality of $A_2$ and $B_2$, respectively. The proof of the upper bound from Theorem \ref{th:general bounds} remains valid for any total Roman dominating functions $g$ and $h$ of graphs $G$ and $H$ without isolated vertices, respectively, as long as we exchange $\gamma_{tR}(G)$ and $\gamma_{tR}(H)$ by $\omega(g)$ and $\omega(h)$, respectively, in the last step of the proof. Therefore, we can improve the upper bound of Theorem \ref{th:general bounds}, we we next show.

\begin{remark}\label{remark}
For every two graphs $G$ and $H$ without vertices of degree 0,
$$\gamma_{tR}(G\times H)\le \min\{\omega(g)\omega(h)-2|A_2||B_2|\},$$
where such minimum value is understood for every total Roman dominating functions $g=(A_0,A_1,A_2)$ and $h=(B_0,B_1,B_2)$ on $G$ and $H$, respectively.
\end{remark}

Despite the fact that the bound above represents an advance with respect to the upper bound of Theorem \ref{th:general bounds}, we have no knowledge of one pair of graphs $G$ and $H$ where the bound given in Remark \ref{remark} is better than the upper bound of Theorem \ref{th:general bounds}.

Let $D_G$ be a $\gamma_t(G)$-set. Clearly, the function $g=(V(G)-D_G,\emptyset,D_G)$ total Roman dominating for $G$ and the weight of $g$ is $\omega(g)=2\gamma_t(G)$. Remark \ref{remark} yields the following connection.

\begin{corollary}\label{total}
For any graphs $G$ and $H$ without vertices of degree 0,
$$\gamma_{tR}(G\times H)\le 2\gamma_{t}(G)\gamma_{t}(H).$$
\end{corollary}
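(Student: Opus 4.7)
The plan is to apply Remark~\ref{remark} directly to a pair of specially chosen total Roman dominating functions, one on $G$ and one on $H$, constructed from minimum total dominating sets of the factors. The construction is precisely the one suggested in the sentence preceding the statement, so the proof reduces to verifying that the construction meets the hypotheses of Remark~\ref{remark} and then performing an elementary arithmetic simplification.

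First I would fix a $\gamma_t(G)$-set $D_G$ and a $\gamma_t(H)$-set $D_H$, and define the functions $g=(V(G)\setminus D_G,\emptyset,D_G)$ on $G$ and $h=(V(H)\setminus D_H,\emptyset,D_H)$ on $H$. To see that $g$ is a total Roman dominating function on $G$ it suffices to recall two facts: every vertex outside $D_G$ has a neighbor in $D_G$ (because $D_G$ is dominating), and every vertex of $D_G$ also has a neighbor in $D_G$ (because $D_G$ is \emph{totally} dominating). Hence all vertices with label $0$ are adjacent to a vertex with label $2$, and the subgraph induced by the support $D_G$ has minimum degree at least one. The weight of $g$ is $\omega(g)=2|D_G|=2\gamma_t(G)$, and analogously $\omega(h)=2\gamma_t(H)$.

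Next I would invoke Remark~\ref{remark} with these two functions. With the notation of the remark, $A_2=D_G$ and $B_2=D_H$, so $|A_2|=\gamma_t(G)$ and $|B_2|=\gamma_t(H)$. Substituting into the bound,
\[
\gamma_{tR}(G\times H)\le \omega(g)\,\omega(h)-2|A_2||B_2|=4\gamma_t(G)\gamma_t(H)-2\gamma_t(G)\gamma_t(H)=2\gamma_t(G)\gamma_t(H),
\]
which is the desired inequality.

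There is no genuine obstacle here: the only point that demands care is to confirm the construction really gives total Roman dominating functions on the factors (which relies on the ``total'' part of total domination, as $D_G$ itself must induce a subgraph of minimum degree at least one); after that, the result is an immediate corollary of Remark~\ref{remark}.
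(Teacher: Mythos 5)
Your proof is correct and follows exactly the route the paper intends: build $g=(V(G)\setminus D_G,\emptyset,D_G)$ and $h=(V(H)\setminus D_H,\emptyset,D_H)$ from minimum total dominating sets, check they are total Roman dominating functions of weights $2\gamma_t(G)$ and $2\gamma_t(H)$, and apply Remark~\ref{remark} to get $4\gamma_t(G)\gamma_t(H)-2\gamma_t(G)\gamma_t(H)=2\gamma_t(G)\gamma_t(H)$. You have in fact spelled out the verification more explicitly than the paper, which leaves it as ``clearly''.
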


If the graphs $G$ and $H$ represents efficient open domination graphs, then $\rho_o(H)=\gamma_t(H)$ and $\rho_o(G)=\gamma_t(G)$ (see Observation 1.1 from \cite{KuPeYe1}), and  Corollary \ref{total} implies the following.

\begin{corollary}
\label{th:two-EOD}
If the graphs $G$ and $H$ represents efficient open domination graphs, then $\gamma_{tR}(G\times H)\le 2\rho_o(G)\rho_o(H)$.
\end{corollary}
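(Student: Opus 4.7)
The plan is simply to chain together the preceding Corollary~\ref{total} with the structural identity for efficient open domination graphs recalled just above the statement. Concretely, Corollary~\ref{total} already gives, for any graphs $G,H$ without isolated vertices, that $\gamma_{tR}(G\times H)\le 2\gamma_t(G)\gamma_t(H)$, so it suffices to rewrite each $\gamma_t$-factor as a $\rho_o$-factor under the efficient open domination hypothesis.

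That rewriting is exactly what Observation~1.1 of \cite{KuPeYe1} provides, as invoked in the sentence preceding the statement: if $G$ is an efficient open domination graph, then $\rho_o(G)=\gamma_t(G)$, and likewise $\rho_o(H)=\gamma_t(H)$. (The underlying reason, which I would at most mention parenthetically, is that $\rho_o\le \gamma_t$ always, whereas a set $D$ which is simultaneously total dominating and open packing satisfies $\gamma_t(G)\le|D|\le\rho_o(G)$, forcing both inequalities to be equalities.) Substituting these into the bound of Corollary~\ref{total} immediately yields $\gamma_{tR}(G\times H)\le 2\rho_o(G)\rho_o(H)$.

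There is essentially no obstacle here: the statement is a one-line corollary that packages the previous bound in the language of open packing, and all the combinatorial work has already been done in the proof of Theorem~\ref{th:general bounds} (via Remark~\ref{remark}) and in the cited observation. The only thing to be careful about is to make sure that the efficient open domination assumption is applied to both factors separately, which is exactly what the hypothesis of the statement guarantees.
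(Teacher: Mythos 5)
Your proof is correct and follows exactly the paper's route: apply Corollary~\ref{total} and replace each $\gamma_t$-factor by $\rho_o$ using the identity $\rho_o(G)=\gamma_t(G)$ for efficient open domination graphs (Observation~1.1 of \cite{KuPeYe1}). Your parenthetical justification of that identity is also sound, so nothing further is needed.
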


A graph $G$ is known to be a \emph{total Roman graph} if it satisfies that $\gamma_{tR}(G)=2\gamma_t(G)$. In the case of two total Roman graphs we can develop the upper bound of Corollary \ref{total} to the following result.

\begin{corollary}\label{cor2}
If $G$ and $H$ are two total Roman graphs, then
$$\gamma_{tR}(G\times H)\le \frac{\gamma_{tR}(G)\gamma_{tR}(H)}{2}.$$
\end{corollary}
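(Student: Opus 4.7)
The plan is essentially to substitute the defining identity of a total Roman graph into the bound already established in Corollary \ref{total}, so this will be a very short chain of equalities and inequalities rather than a substantive new argument.

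More concretely, I would start by recalling that by Corollary \ref{total} the inequality
\[
\gamma_{tR}(G\times H)\le 2\gamma_t(G)\gamma_t(H)
\]
holds for any two graphs without isolated vertices, with no additional hypothesis required. Next, I would use the hypothesis that both $G$ and $H$ are total Roman graphs to rewrite $\gamma_t(G)=\gamma_{tR}(G)/2$ and $\gamma_t(H)=\gamma_{tR}(H)/2$. Substituting these into the preceding inequality gives
\[
\gamma_{tR}(G\times H)\le 2\cdot\frac{\gamma_{tR}(G)}{2}\cdot\frac{\gamma_{tR}(H)}{2}=\frac{\gamma_{tR}(G)\gamma_{tR}(H)}{2},
\]
which is exactly the claim.

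There is no real obstacle here: the entire content is that Corollary \ref{total} provides a bound in terms of total domination numbers, and being a total Roman graph is precisely the hypothesis that makes $2\gamma_t$ and $\gamma_{tR}$ coincide (up to a factor of $2$), so the translation is purely mechanical. The only thing one might want to double-check is that the product $G\times H$ still has no isolated vertices, so that $\gamma_{tR}(G\times H)$ is well-defined; but this is immediate from $\delta_{G\times H}(g,h)=\delta_G(g)\delta_H(h)\ge 1$ whenever both factors have minimum degree at least one, as noted in the introduction after (\ref{neighb}).
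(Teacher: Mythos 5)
Your proposal is correct and is exactly the argument the paper intends: substitute $\gamma_t(G)=\gamma_{tR}(G)/2$ and $\gamma_t(H)=\gamma_{tR}(H)/2$ (the defining identity of total Roman graphs) into the bound $\gamma_{tR}(G\times H)\le 2\gamma_t(G)\gamma_t(H)$ of Corollary \ref{total}. The remark about $G\times H$ having no isolated vertices is a reasonable extra check but adds nothing essential.
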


The bound given in Theorem \ref{th:general bounds} can be enhanced by a factor of $2$, whenever one factor is bipartite and the other without triangles as shown next.

\begin{theorem}
\label{th:triangle-free-bipar}
If $G$ is a triangle free graph and $H$ is a bipartite graph of order at least two without isolated vertices, then
$$\gamma_{tR}(G\times H)\ge 2\rho(G)\gamma_{tR}(H).$$
\end{theorem}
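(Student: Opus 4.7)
The plan is to sharpen the lower-bound argument of Theorem~\ref{th:general bounds} by a factor of two. I take a $\gamma_{tR}(G\times H)$-function $f$ and a $\rho(G)$-set $S=\{u_1,\dots,u_{\rho(G)}\}$; since $S$ is a packing, the slabs $N_G[u_i]\times V(H)$ are pairwise disjoint, so it will be enough to establish the per-vertex estimate $f(N_G[u_i]\times V(H))\ge 2\gamma_{tR}(H)$ for each $i$ and then sum over $i$.

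Fix a bipartition $V(H)=X\cup Y$. For each $i$ I partition the slab into $A_i=(\{u_i\}\times X)\cup(N_G(u_i)\times Y)$ and $B_i=(\{u_i\}\times Y)\cup(N_G(u_i)\times X)$. Using that $X$ and $Y$ are independent in $H$ (bipartiteness) and that $N_G(u_i)$ is independent in $G$ (triangle-freeness), one checks that in the induced subgraph of $G\times H$ on $N_G[u_i]\times V(H)$ every edge stays inside $A_i$ or inside $B_i$. From $f|_{A_i}$ and $f|_{B_i}$ I define two candidate functions on $V(H)$:
\[h_i^A(v)=\begin{cases}f(u_i,v)&\text{if }v\in X,\\ \max_{u'\in N_G(u_i)}f(u',v)&\text{if }v\in Y,\end{cases}\qquad h_i^B(v)=\begin{cases}\max_{u'\in N_G(u_i)}f(u',v)&\text{if }v\in X,\\ f(u_i,v)&\text{if }v\in Y.\end{cases}\]
Directly from the definitions, $\omega(h_i^A)\le f(A_i)$ and $\omega(h_i^B)\le f(B_i)$, so $\omega(h_i^A)+\omega(h_i^B)\le f(N_G[u_i]\times V(H))$.

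The heart of the proof is to show that $h_i^A$ and $h_i^B$ are both total Roman dominating functions on $H$. Once this is done, $2\gamma_{tR}(H)\le\omega(h_i^A)+\omega(h_i^B)\le f(N_G[u_i]\times V(H))$, and summing over $i$ yields the theorem. The verification splits into four cases per function according to whether $v\in X$ or $v\in Y$ and whether $h_i^{\bullet}(v)$ is zero or positive. The \emph{easy} cases ($v\in X$ for $h_i^A$ and $v\in Y$ for $h_i^B$) follow directly from the TRDF property of $f$ at $(u_i,v)$: because $N_{G\times H}(u_i,v)=N_G(u_i)\times N_H(v)$ is contained in $A_i$ (respectively $B_i$), any 2-neighbor or positive neighbor of $(u_i,v)$ already projects to a suitable $v'\in N_H(v)$ with $h_i^A(v')\ge 2$ (respectively $\ge 1$), exactly as in the proof of Theorem~\ref{th:general bounds}.

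The main obstacle is the remaining cases: $v\in Y$ for $h_i^A$ and symmetrically $v\in X$ for $h_i^B$. Here the natural TRDF witness comes from some vertex $(u',v)$ with $u'\in N_G(u_i)$, whose 2-neighbor $(x,y)$ has first coordinate $x\in N_G(u')$. Triangle-freeness of $G$ forces $x\in\{u_i\}\cup(V(G)\setminus N_G[u_i])$, and only the option $x=u_i$ yields a value visible to $h_i^A$. The delicate step will be to show that this option is always available: by combining the TRDF witnesses supplied by $(u',v)$ for every $u'\in N_G(u_i)$ with the parallel witness at $(u_i,v)$ itself, and using the bipartite structure of $H$ to confine the relevant second coordinates to $N_H(v)\cap X$, one rules out the bad scenario in which every such witness lands outside $N_G[u_i]$. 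This interplay of the two structural hypotheses is precisely where triangle-freeness of $G$ and bipartiteness of $H$ are used in concert.
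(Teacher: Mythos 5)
Your decomposition of the slab $N_G[u_i]\times V(H)$ into the two components $A_i$ and $B_i$ is exactly the route the paper takes (this is where triangle-freeness of $G$ and bipartiteness of $H$ enter), and your verification of the ``easy'' cases is correct. But the entire proof rests on the step you explicitly postpone: showing that, for $v\in Y$, the witnesses of the vertices $(u',v)$ with $u'\in N_G(u_i)$ can always be taken with first coordinate $u_i$. You never carry this out --- ``the delicate step will be to show\dots one rules out the bad scenario'' is a plan, not an argument --- so the proposal is incomplete exactly where the difficulty lies.

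Worse, the bad scenario cannot be ruled out, so this completion is unavailable. Take $G=C_4$ with vertices $g_1g_2g_3g_4$ and $H=P_4$ with vertices $h_1h_2h_3h_4$, so that $\rho(G)=1$ and $\gamma_{tR}(H)=4$. The function $f$ with $V_2=\{(g_2,h_2),(g_3,h_3),(g_2,h_3),(g_3,h_2)\}$ and $V_1=\emptyset$ is a $\gamma_{tR}(C_4\times P_4)$-function of weight $8$ (each of the two components of $C_4\times P_4$ has eight vertices, maximum degree $4$, and hence needs weight at least $4$). Choosing the packing vertex $u_1=g_1$ gives $f(N_G[g_1]\times V(H))=f(\{g_4,g_1,g_2\}\times V(H))=4<8=2\gamma_{tR}(H)$, so the per-slab inequality on which your argument (and the paper's own sketch) relies is false for this legitimate choice of $f$ and $S$. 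Concretely, with $X=\{h_1,h_3\}$ and $Y=\{h_2,h_4\}$ your $h_1^A$ takes the values $0,2,0,0$ on $h_1,h_2,h_3,h_4$ and is not a total Roman dominating function of $P_4$: the $2$-witnesses of $(g_2,h_4)$ and $(g_4,h_4)$ both sit at $(g_3,h_3)$, outside the slab, which is precisely the scenario you hoped to exclude. The conclusion of the theorem still holds here ($\gamma_{tR}(C_4\times P_4)=8$), but establishing it requires an argument that does not bound the weight of each slab of an arbitrary packing separately; neither your proposal nor the paper's two-line justification of the claim $f(N_G[u_i]\times V(H))\ge 2\gamma_{tR}(H)$ provides one.
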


\begin{proof}
Let $f$ and $S$ be defined in a similar manner to that of the proof of Theorem \ref{th:general bounds} for the lower bound. Clearly, for any vertex $u_i\in S$, $N_G[u_i]\times V(H)$ induces a non connected graph with at least two components. In this sense, for every $i\in \{1,\dots, \rho(G)\}$ and for every component of the subgraph induced by $N_G[u_i]\times V(H)$, we can construct a total Roman dominating function in the same style as in the proof of Theorem \ref{th:general bounds}. This means that $f(N_G[u_i]\times V(H))\ge 2\gamma_{tR}(H)$. A similar argument as the one used to prove Theorem \ref{th:general bounds} gives the stated bound.
\end{proof}

By using a similar argument as the one used while proving the lower bound of Theorem \ref{th:general bounds}, but using an open packing instead of a packing, we can also deduce the lower bound in the following result.

\begin{theorem}\label{EOD}
For any graphs $G$ and $H$ without vertices of degree 0 and of orders at least three,
$$\gamma_{tR}(G\times H)\ge \max\left\{\frac{\rho_o(H)\gamma_{tR}(G)}{2},\frac{\rho_o(G)\gamma_{tR}(H)}{2}\right\}.$$
\end{theorem}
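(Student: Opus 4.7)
The plan is to emulate the lower bound proof of Theorem~\ref{th:general bounds} essentially verbatim, but with an open packing $S=\{u_1,\dots,u_{\rho_o(G)}\}$ of $G$ in place of an ordinary packing, and then pay for the overlap of closed neighborhoods with a factor of $2$. Fix a $\gamma_{tR}(G\times H)$-function $f$, and for each $i$ define $h_i$ on $H$ by $h_i(v)=\max\{f(u,v):u\in N_G[u_i]\}$, just as in Theorem~\ref{th:general bounds}. The argument there (which uses only that $u_i$ has at least one neighbor in $G$ and that $f$ is total Roman dominating on $G\times H$) still shows that each $h_i$ is a total Roman dominating function on $H$ with $\omega(h_i)\le f(N_G[u_i]\times V(H))$, hence $\gamma_{tR}(H)\le f(N_G[u_i]\times V(H))$.

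The step that must be redone is summing over $i$, because now the sets $N_G[u_i]$ need not be pairwise disjoint. Here I would invoke the open packing property directly: if a vertex $u\in V(G)$ lies in $N_G(u_i)\cap N_G(u_j)$ for $i\neq j$, that contradicts the definition of an open packing. Therefore, any vertex $u\in V(G)\setminus S$ lies in at most one of the sets $N_G[u_i]$, and any $u_j\in S$ lies in $N_G[u_j]$ and in at most one additional $N_G[u_i]$ (namely for the unique $u_i\in S$, if any, that is adjacent to $u_j$; a second such $u_i$ would again put $u_j$ into two of the $N_G(u_i)$'s). Consequently every vertex of $G$, and hence every vertex $(u,v)$ of $G\times H$, is counted at most twice in the sum, giving
\begin{equation*}
\sum_{i=1}^{\rho_o(G)} f(N_G[u_i]\times V(H))\le 2\,\omega(f)=2\gamma_{tR}(G\times H).
\end{equation*}
Combining this with the per-$i$ bound yields $\rho_o(G)\gamma_{tR}(H)\le 2\gamma_{tR}(G\times H)$, i.e.\ $\gamma_{tR}(G\times H)\ge \rho_o(G)\gamma_{tR}(H)/2$, and the inequality with the roles of $G$ and $H$ swapped follows by the symmetry of $G\times H$.

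The only real subtlety, and thus the step I expect to need the most care, is justifying that the overlap of the closed neighborhoods $N_G[u_i]$ of an open packing is at most $2$. Once that combinatorial observation is in place, the rest is a direct transcription of the lower bound proof of Theorem~\ref{th:general bounds}. I would not redefine $h_i$ with an open neighborhood instead of a closed one, because doing so makes it awkward to recover the property that vertices of label $0$ in $h_i$ have a label-$2$ neighbor (the vertex $u_i$ itself is needed in the witness argument for that case); keeping $N_G[u_i]$ and paying the factor of $2$ through double-counting is cleaner and matches the form of the stated bound.
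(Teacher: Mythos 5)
Your proof is correct, but it reaches the factor of $2$ by a genuinely different route than the paper. The paper splits the $\rho_o(G)$-set as $S=S_0\cup S_1$, where $S_0$ consists of the vertices isolated in the induced subgraph and $S_1$ of those matched into adjacent pairs; it then builds one projected function $h_i$ per vertex of $S_0$ (using $N_G[u_i]$, as in Theorem \ref{th:general bounds}) and one function $h'_i$ per adjacent pair $w_i\sim w'_i$ of $S_1$ (using $N_G(w_i)\cup N_G(w'_i)$), verifies anew that each $h'_i$ is total Roman dominating, and sums over these pairwise \emph{disjoint} slabs to get $\bigl(|S_0|+|S_1|/2\bigr)\gamma_{tR}(H)\ge \rho_o(G)\gamma_{tR}(H)/2$. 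You instead keep one closed-neighborhood slab $N_G[u_i]$ per packing vertex, reuse the Theorem \ref{th:general bounds} argument verbatim for every $i$, and absorb the loss into the summation via the observation that, for an open packing, every vertex of $G$ lies in at most two of the sets $N_G[u_i]$ (at most one as a neighbor, since the open neighborhoods are pairwise disjoint, plus possibly itself); that multiplicity bound is correct and gives $\sum_i f(N_G[u_i]\times V(H))\le 2\,\omega(f)$. Your version is shorter and avoids re-verifying the total Roman property for the pair-based functions; the paper's version buys a marginally sharper intermediate bound when $S_0\ne\emptyset$ and, more relevantly, constructs exactly the pair-neighborhood functions $h'_i$ that are reused in Theorem \ref{th:triangle-free-regular}, where bipartiteness of $H$ recovers the lost factor of $2$ --- an improvement your overlapping-slab bookkeeping would not deliver as directly.
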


\begin{proof}
The stated bound is obtained by considering a similar partition of the vertex of $G$, as the one used while proving the lower bound of Theorem \ref{th:general bounds}, but instead of using only one vertex as the ``center'' of each set of the partition, we might need to use now two adjacent vertices as the ``centers''. This is based on the structure of open packing sets.

We consider a $\gamma_{tR}(G\times H)$-function $f$, and a $\rho_o(G)$-set $S=S_0\cup S_1$ such that $S_0$ induces a graph without edges and $S_1$ induces a regular graph of degree 1. Note that $S_0$ or $S_1$ could be empty (although not both at the same time). Now, for every $u_i\in S_0$, we construct a function $h_i(v)=\max\{f(u,v)\,:\,u\in N_G[u_i]\}$ for every $v\in V(H)$.

In the same manner, as in the proof of the lower bound of Theorem \ref{th:general bounds}, we deduce that $h_i$ is a total Roman dominating function on $H$, and so, $\gamma_{tR}(H)\le f(N_G[u_i]\times V(H))$ for every $u_i\in S_0$.

Now, for any pair of adjacent vertices $w_i,w'_i\in S_1$, we construct a function $h'_i$ on $H$ as follows. For every $v\in V(H)$, $h'_i(v)=\max\{f(w,v)\,:\,w\in N_G(w_i)\cup N_G(w'_i)\}$. From now on, let $N_i=N_G(w_i)\cup N_G(w'_i)$ and note that $w_i,w'_i\in N_i$.

If $h'_i(v)\ge 1$, then there exists a vertex $(w,v)\in N_i\times \{v\}$ for which $f(w,v)\ge 1$. Assume for instance, that the vertex $w$ is a neighbor of $w_i$ in $G$ (note that $w$ could be $w'_i$). If $f(w_i,v)=0$, then there exists a vertex $(x,y)\in N_G(w_i)\times N_H(v)$ such that $f(x,y)=2$ and $(x,y)\in N_{G\times H}(w_i,v)$. Also, $h'_i(y)=2$ and $y\in N_H(v)$. If $f(w_i,v)\ge 1$, then there exists a vertex $(x',y')\in N_{G\times H}(w_i,v)$ such that $f(x',y')\ge 1$. In such situation, we get $h'_i(y')\ge 1$ and  $y'\in N_H(v)$ as well.

Now, if $h'_i(v)=0$, then for every vertex $(w,v)\in N_i\times \{v\}$ we have $f(w,v)=0$. Particularly, for the vertex $(w_i,v)$ (or for $(w'_i,v)$ as well), there exists a vertex $(z,v')\in N_{G\times H}(w_i,v)$ with $v'\ne v$ and $f(z,v')=2$. Thus, for the vertex $v'\in V(H)$ we have $v'\in N_H(v)$ and $h'_i(v')=2$.

As a consequence of these arguments, we deduce that $h'_i$ is a total Roman dominating function on $H$ whose weight is less than or equal to  $f(N_i\times V(H))=f((N_G(w_i)\cup N_G(w'_i))\times V(H))$, \emph{i.e.}, $\gamma_{tR}(H)\le f((N_G(w_i)\cup N_G(w'_i))\times V(H))$ for every pair of adjacent vertices $w_i,w'_i\in S_1$. Hence, we have the following.
\begin{align*}
  \gamma_{tR}(G\times H) & \ge\sum_{u_i\in S_0}f(N_G[u_i]\times V(H))+\sum_{w_i,w'_i\in S_1,w_i\sim w'_i}f((N_G(w_i)\cup N_G(w'_i))\times V(H)) \\
   & \ge \left(|S_0|+\frac{|S_1|}{2}\right)\gamma_{tR}(H) \\
   & \ge \frac{\rho_o(G)\gamma_{tR}(H)}{2}.
\end{align*}

By the symmetry of the product, we also deduce that $\gamma_{tR}(G\times H)\ge \frac{\rho_o(H)\gamma_{tR}(G)}{2}$, which completes the first part of the proof.
\end{proof}

The bound of Theorem \ref{EOD} can be improved if we consider one bipartite factor and the other without triangles as next stated.

\begin{theorem}
\label{th:triangle-free-regular}
If $G$ is a graph having no triangles and having a $\rho_o(G)$-set which induces a graph with all components isomorphic to $K_2$, and $H$ is a bipartite graph  without vertices of degree 0 and of order at least two, then
$$\gamma_{tR}(G\times H)\ge \rho_o(G)\gamma_{tR}(H).$$
\end{theorem}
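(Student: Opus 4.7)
The approach is to refine the argument of Theorem \ref{EOD} so that each $K_2$-pair $(w_i,w'_i)$ of the open packing set contributes $2\gamma_{tR}(H)$ rather than just $\gamma_{tR}(H)$ to the total weight of any $\gamma_{tR}(G\times H)$-function, by using the bipartition of $H$ and the triangle-freeness of $G$ to split each pair's ``contribution region'' into two disjoint pieces, each carrying its own total Roman dominating function on $H$. Concretely, I fix a $\gamma_{tR}(G\times H)$-function $f$ and a $\rho_o(G)$-set $S$ whose induced subgraph is a disjoint union of edges $w_iw'_i$, $i=1,\dots,r$, where $r=\rho_o(G)/2$; the open packing property combined with triangle-freeness forces the sets $N_G(w_i)\cup N_G(w'_i)$ to be pairwise disjoint, so it suffices to prove
$$f\bigl((N_G(w_i)\cup N_G(w'_i))\times V(H)\bigr)\ge 2\gamma_{tR}(H)$$
for each $i$; summing then gives the claim.

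Fix $i$ and set $A=N_G(w_i)$, $A'=N_G(w'_i)$. Triangle-freeness gives $A\cap A'=\emptyset$, $w_i\in A'$, $w'_i\in A$, and both sets are independent in $G$. Let $V(H)=B_1\cup B_2$ be the bipartition of $H$. I define two auxiliary functions $h_1,h_2:V(H)\to\{0,1,2\}$ by cross-pairing $A,A'$ with $B_1,B_2$, for instance
$$h_1(v)=\max\{f(u,v):u\in A'\}\ \text{for}\ v\in B_1,\qquad h_1(v)=\max\{f(u,v):u\in A\}\ \text{for}\ v\in B_2,$$
with $h_2$ defined using the opposite pairing. The cross-swap is what makes the argument work: for $v\in B_1$ the $G\times H$-neighbourhood of $(w_i,v)$ is $A\times N_H(v)\subseteq A\times B_2$, which is precisely the region that $h_1$ reads on $B_2$; the analogous statement holds for $(w'_i,v)$ when $v\in B_2$.

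The verification that $h_1$ (and by the same argument $h_2$) is a total Roman dominating function on $H$ now proceeds exactly as in the proof of Theorem \ref{EOD}: if $h_1(v)=0$ with $v\in B_1$, then $f(w_i,v)=0$ because $w_i\in A'$, and the Roman condition of $f$ at $(w_i,v)$ supplies a vertex of weight $2$ in $A\times N_H(v)$, certifying $h_1(y)=2$ for some $y\in N_H(v)$; if $h_1(v)\ge 1$, the total or Roman condition of $f$ applied at $(w_i,v)$ produces a positive-weight neighbour in the same region. The cases with $v\in B_2$ use $(w'_i,v)$ symmetrically. Since $h_1(v)+h_2(v)\le\sum_{u\in A\cup A'}f(u,v)$ by construction, we obtain $\omega(h_1)+\omega(h_2)\le f((A\cup A')\times V(H))$, and each $\omega(h_k)\ge\gamma_{tR}(H)$ delivers the desired factor-two bound.

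The main obstacle is choosing the pairing between $\{A,A'\}$ and $\{B_1,B_2\}$ correctly; a naive definition such as $h_1(v)=\max_{u\in A}f(u,v)$ for all $v$ causes the Roman/total condition applied at the centres $(w_i,v)$ or $(w'_i,v)$ to project into the ``wrong'' part of the bipartition of $H$, so one cannot certify any value for $h_1$ there and only recovers the weaker Theorem \ref{EOD}-type bound. Once the cross-swap is in place the whole verification is routine, mirroring the corresponding part of Theorem \ref{EOD}.
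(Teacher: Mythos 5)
Your proposal is correct and follows essentially the same route as the paper: the two cross-paired functions $h_1,h_2$ are exactly the projections of $f$ restricted to the two components $(N_G(w'_i)\times B_1)\cup(N_G(w_i)\times B_2)$ and $(N_G(w_i)\times B_1)\cup(N_G(w'_i)\times B_2)$ into which the paper observes the subgraph induced by $(N_G(w_i)\cup N_G(w'_i))\times V(H)$ splits, using bipartiteness of $H$ and triangle-freeness of $G$. You have merely made explicit the construction that the paper delegates to ``similar arguments as in Theorems \ref{th:general bounds} and \ref{EOD}''.
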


\begin{proof}
Let $f$ be a $\gamma_{tR}(G\times H)$-function, and assume $S=\{u_1,v_1,\dots,u_{\rho_o(G)/2},v_{\rho_o(G)/2}\}$ is a $\rho_o(G)$-set such that $u_i\sim v_i$ for every $i\in \{1,\dots,\rho_o(G)/2\}$. Since $H$ is bipartite and $G$ is triangle free, the set $(N(u_i)\cup N(v_i))\times V(H)$ induces a non connected graph with at least two components. In concordance with this fact, by using similar arguments as those ones in the proofs for the lower bounds of Theorems \ref{th:general bounds} and \ref{EOD}, we deduce that for every $i\in \{1,\dots,\rho_o(G)/2\}$, we can construct two total Roman dominating functions $h_i,h'_i$ on $H$ satisfying that $2\gamma_{tR}(H)\le \omega(h_i)+\omega(h'_i)\le f((N_G(u_i)\cup N_G(v_i))\times V(H))$. Therefore, we obtain that
$$\gamma_{tR}(G\times H)\ge \sum_{i=1}^{\rho_o(G)/2}f((N_G(u_i)\cup N_G(v_i))\times V(H))=\frac{\rho_o(G)}{2}(\omega(h_i)+\omega(h'_i))\ge \rho_o(G)\gamma_{tR}(H),$$
and the proof is completed.
\end{proof}


\section{Direct product graphs with small $\gamma_{tR}(G\times H)$}

We concentrate our attention in this section on the case when $\gamma_{tR}(G\times H)$ is small. We shall characterize all the direct products graphs $G\times H$ for which $\gamma_{tR}(G\times H)\leq 7$. For this we need the following class of graphs.

A graph $G$ is called \emph{triangle centered} if there exists a triangle $C_3=xyz$ in $G$ such that every vertex of $G$ is adjacent to at least two vertices of $C_3$. We call such $C_3$ as the \emph{central triangle} of a triangle centered graph. Notice that any two vertices of a central triangle form a total dominating set of a triangle centered graph $G$ and we have $\gamma_t(G)=2$.

\begin{theorem}\label{small}
The following assertions holds for any two graphs $G$ and $H$ without vertices of degree 0.
\begin{itemize}
\item[(i)] There are no graphs $G$ and $H$ for which $\gamma_{tR}(G\times H)\in\{1,2,3,5\}$ .
\item[(ii)] $\gamma_{tR}(G\times H)=4$ if and only if $G$ and $H$ are both isomorphic to $K_2$.
\item[(iii)] $\gamma_{tR}(G\times H)=6$ if and only if ($G$ and $H$ have at least two universal vertices each and at least one of them is of order at least three), or (one factor is $K_2$ and the other one is of order at least three and contains a universal vertex), or (the graphs $G$ and $H$ are triangle centered).
\item[(iv)] $\gamma_{tR}(G\times H)=7$ if and only if both $G$ and $H$ have a universal vertex, one of the graph $G$ and $H$ has exactly one universal vertex, and the other one is different from $K_2$, and only one of $G$ and $H$ can be triangle centered.
\item[(v)] If at most one of the graphs $G$ and $H$ has a universal vertex, $\gamma_t(G)=\gamma_t(H)=2$, and $G$ and $H$ are not both triangle centered, then $\gamma_{tR}(G\times H)=8$.
\end{itemize}
\end{theorem}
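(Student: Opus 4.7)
\bigskip

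\noindent\emph{Proof plan for (v).} The plan is to sandwich $\gamma_{tR}(G\times H)$ between $8$ and $8$. For the upper bound I simply apply Corollary~\ref{total}: since $\gamma_t(G)=\gamma_t(H)=2$ we obtain
$$\gamma_{tR}(G\times H)\le 2\gamma_t(G)\gamma_t(H)=8.$$

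For the lower bound I would argue by exclusion, using parts (i)--(iv) of the theorem itself to rule out every value at most $7$ under the standing hypothesis that (a) at most one of $G,H$ has a universal vertex, (b) $\gamma_t(G)=\gamma_t(H)=2$, and (c) $G$ and $H$ are not both triangle centered. Part (i) immediately discards the values $1,2,3,5$. For the value $4$, part (ii) forces $G\cong H\cong K_2$; but both vertices of $K_2$ are universal, so both factors would have a universal vertex, contradicting (a). For the value $6$, part (iii) lists three mutually exclusive scenarios: the first two require each of $G$ and $H$ to contain a universal vertex, contradicting (a), while the third requires both $G$ and $H$ to be triangle centered, contradicting (c). Finally, for the value $7$, part (iv) stipulates that both $G$ and $H$ have a universal vertex, again contradicting (a).

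Combining the upper bound with the exclusion of all values $1,\dots,7$ gives $\gamma_{tR}(G\times H)=8$, as required.

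I do not anticipate a substantive obstacle here: the whole argument is a bookkeeping step that relies on the characterizations established in parts (i)--(iv) and on the upper bound from Corollary~\ref{total}. The only point requiring a little care is to check that each sub-case of (ii), (iii), (iv) really does force a universal vertex in both factors (or forces both factors to be triangle centered); one just reads the statements and notes, in particular, that every vertex of $K_2$ is universal, so any hypothesis that one factor equals $K_2$ automatically yields a universal vertex in that factor.
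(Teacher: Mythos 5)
Your argument for item (v) is correct and is essentially the paper's own. The authors likewise obtain the lower bound $\gamma_{tR}(G\times H)\ge 8$ by exclusion from items (i)--(iv); for the upper bound they explicitly take $\gamma_t$-sets $D_G=\{g,g'\}$ and $D_H=\{h,h'\}$ and verify that $f=(V(G\times H)-(D_G\times D_H),\emptyset,D_G\times D_H)$ is a total Roman dominating function of weight $8$, which is exactly the construction underlying Corollary~\ref{total}, so invoking that corollary is a legitimate shortcut. Your case check that each scenario in (ii), (iii) and (iv) violates hypothesis (a) or (c) is also sound; in particular you are right that every vertex of $K_2$ is universal, so the $K_2$ sub-cases do force universal vertices in both factors.

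The gap is one of coverage rather than correctness: the statement is the whole five-part theorem, and your proposal treats only (v), taking (i)--(iv) for granted. Those four items are where essentially all of the work lies --- the paper proves them by a lengthy case analysis on the possible cardinalities of $V_1$ and $V_2$ for a $\gamma_{tR}(G\times H)$-function (for weight $6$ the cases $(|V_1|,|V_2|)\in\{(0,3),(2,2),(4,1)\}$, for weight $7$ the cases $(1,3),(3,2),(5,1)$, and so on), extracting in each case the structural conclusions about universal vertices, $K_2$ factors and central triangles. Since the lower bound in your proof of (v) rests entirely on (i)--(iv), the proposal as it stands is a correct reduction of (v) to the earlier items, not a proof of the theorem.
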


\begin{proof}

For $(i)$ notice that there must be at least two adjacent vertices $(g,h)$ and $(g',h')$ in $V_1\cup V_2$ for a $\gamma_{tR}(G\times H)$-function $f=(V_0,V_1,V_2)$. If $|V_1\cup V_2|=2$, then $(g,h')$ and $(g',h)$ have label $0$ and no neighbor with label $2$, a contradiction. This already shows that $\gamma_{tR}(G\times H)\geq 3$. If $\gamma_{tR}(G\times H)=3$, then either $|V_1\cup V_2|=2$, which is not possible, or $|V_1\cup V_2|=3$. In later case there are three vertices of label $1$ and no vertex of label $2$, a contradiction as we have $|V(G\times H)|\geq 4$. Hence $\gamma_{tR}(G\times H)>3$.

To end with $(i)$ suppose that $\gamma_{tR}(G\times H)=5$. Let first $|V_2|=2$ where $(g,h),(g_1,h_1)\in V_2$. If $g\neq g_1$ and $h\neq h_1$, then only one vertex from $(g,h_1)$ and $(g_1,h)$ can have label $1$ and the other has label $0$ and is not adjacent to a vertex of label $2$, a contradiction. So, either $g=g_1$ or $h=h_1$, say $g=g_1$. In $V_1$ is only one vertex, say $(g_2,h_2)$, and it must be adjacent to both vertices of $V_2$. This means that $h_2\neq h$ and $h_2\neq h_1$. But then $(g,h_2)$ posses label $0$ and is not adjacent to a vertex of label $2$, a contradiction.

So let $|V_2|=1$ where $(g,h)\in V_2$ and $(g',h')\in V_1$ is adjacent to $(g,h)$. There are only two more vertices in $V_1$ and these vertices must be $(g,h')$ and $(g',h)$ because they are not adjacent to $(g,h)$. If there exists any other vertex from the mentioned four, then such a vertex implies the existence of a vertex of label $0$ in $G^h\cup H^g$, a contradiction. Hence we have only four vertices and $G\times H\cong K_2\times K_2$. But in this case we have $\gamma_{tR}(G\times H)\leq 4$ as there exists a total Roman dominating function with $V_1=V(G)\times V(H)$. This is the final contradiction and $\gamma_{tR}(G\times H)\neq 5$.

The implication $(\Leftarrow)$ of item $(ii)$ follows from $(i)$ and the total Roman dominating function with $V_1=V(K_2)\times V(K_2)$. For $(\Rightarrow)$ of $(ii)$ suppose that at least one of $G$ and $H$ contains more than three vertices. Hence $|V(G)\times V(H)|\geq 6$ and if all vertices have label $1$, then $\gamma_{tR}(G\times H)\geq 6>4$. Otherwise, if $V_0\neq \emptyset$, then also $V_2\neq \emptyset$. Let $(g,h)\in V_2$ and let $(g',h')\in V_1\cup V_2$ be a neighbor of $(g,h)$. If also $(g,h'),(g',h)\in V_1\cup V_2$, then we have $\gamma_{tR}(G\times H)>4$. On the other hand, if at least one of $(g,h')$ and $(g',h)$ has label $0$, then there exists a vertex of label $2$ different than $(g,h)$ and $(g',h')$, meaning that $\gamma_{tR}(G\times H)>4$ again and $(ii)$ is done.

For $(iii)$ we start with $(\Leftarrow)$. We know from $(i)$ and $(ii)$ that $\gamma_{tR}(G\times H)\geq 6$ whenever at least one of $G$ and $H$ contains more than two vertices, which is true in all three cases. Suppose first that each $G$ and $H$ have at least two universal vertices $g,g'$ and $h,h'$, respectively, and are of order at least three. If we set $V_2=\{(g,h),(g',h')\}$, $V_1=\{(g,h'),(g',h)\}$ and $V_0=V(G)-(V_1\cup V_2)$, then $f_1=(V_0,V_1,V_2)$ is a total Roman dominating function with $\omega(f)=6$. Assume now that one factor, say $H$, is $K_2$ and that $G$ contains at least three vertices together with a universal vertex $g$. For $V(H)=\{h,h'\}$ we define $f_2=(V'_0,V'_1,V'_2)$ by making $V'_2=\{(g,h),(g,h')\}$, $V'_1=\{(g',h'),(g',h)\}$ and $V'_0=V(G)-(V_1\cup V_2)$ for an arbitrary neighbor $g'$ of $g$ in $G$. It is easy to check that $f_2$ is a total Roman dominating function with $\omega(f_2)=6$. The third possibility is that both $G$ and $H$ are triangle centered graphs with central triangles $g_1g_2g_3$ and $h_1h_2h_3$, respectively. We define $V''_2=\{(g_1,h_1),(g_2,h_2),(g_3,h_3)\}$, $V''_1=\emptyset$ and $V''_0=V(G)-V_2$. We will show that $f_3=(V''_0,V''_1,V''_2)$ is a total Roman dominating function. First notice that $V_2$ induces a triangle in $G\times H$. Let $(g,h)\in V_0$. By the definition of the central triangle, $g$ and $h$ are adjacent to at least two vertices of $\{g_1,g_2,g_3\}$ and $\{h_1,h_2,h_3\}$, respectively. Hence, there exists $i\in \{1,2,3\}$ such that $gg_i\in E(G)$ and $hh_i\in E(H)$, and $(g,h)$ is adjacent to $(g_i,h_i)\in V_2$. Therefore, $f$ is a total Roman dominating function on $G\times H$ with $\omega(f_3)=6$. In all three cases we have $\gamma_{tR}(G\times H)\leq 6$ and by $(i)$ and $(ii)$ the equality $\gamma_{tR}(G\times H)=6$ follows.

For the opposite implication $(\Rightarrow)$ of $(iii)$ we have $\gamma_{tR}(G\times H)=6$ and analyze the different possibilities for the cardinalities of $V_1$ and $V_2$ for a $\gamma_{tR}(G\times H)$-function $f=(V_0,V_1,V_2)$. We start with $|V_1|=0$ and $|V_2|=3$ and let $(g_1,h_1),(g_2,h_2),(g_3,h_3)\in V_2$. As $V_1\cup V_2$ induces a graph without isolated vertices one vertex of these mentioned three, say $(g_2,h_2)$, must be adjacent to the other two. Hence $g_1g_2,g_2g_3\in E(G)$ and $h_1h_2,h_2h_3\in E(H)$. If $g_1g_3\notin E(G)$, then $(g_1,h_2)$ is a vertex of label $0$ not adjacent to a vertex from $V_2$. Similar, if $h_1h_3\notin E(H)$, then $(g_2,h_1)$ is a vertex of label $0$ not adjacent to a vertex from $V_2$. Hence $g_1g_2g_3$ and $h_1h_2h_3$ form a triangle in $G$ and $H$, respectively. Suppose that there exists $g\in V(G)$ that is either adjacent to exactly one vertex of $\{g_1,g_2,g_3\}$, say to $g_1$, or to no vertex of $\{g_1,g_2,g_3\}$. In both cases we obtain $(g,h_1)$ must has label $0$, and is not adjacent to any vertex of $V_1\cup V_2$, which is not possible for a total Roman dominating function $f$. Thus, every vertex $g\in V(G)$ must be adjacent to at least two vertices from $\{g_1,g_2,g_3\}$ and $G$ is triangle centered. Similarly one shows that $H$ is triangle centered and the third option follows.

We continue with $|V_1|=2$ and $|V_2|=2$. Let $(g,h)$ and $(g',h')$ be vertices of label 2. Assume first that $(g,h)$ and $(g',h')$ are adjacent. Hence, the vertices $(g,h')$ and $(g',h)$ are not adjacent to $(g,h)$ nor to $(g',h')$ and must have label 1. All the other vertices are in $V_0$. Moreover, $V_0\neq \emptyset$ as the converse leads to a contradiction with $f$ being a $\gamma_{tR}(G\times H)$-function. Every vertex $(g,x)$, $x\in V(H)-\{h,h'\}$ has label $0$ and is not adjacent to $(g,h)$. Therefore they must be adjacent to $(g',h')$, which means that $h'$ is a universal vertex of $H$. Similarly, every vertex $(g',x)$, $x\in V(H)-\{h,h'\}$ has label $0$ and is not adjacent to $(g',h')$. So they are adjacent to $(g,h)$, and $h$ is a universal vertex of $H$. By symmetric arguments, also $g$ and $g'$ are universal vertices of $G$. Thus, both $G$ and $H$ have at least two universal vertices. If both have only two vertices, then we have a contradiction with $(ii)$. Therefore we obtained the first possibility.

Let now $(g,h)$ and $(g',h')$ be nonadjacent. If they are not in the same ($G$- or $H$-) layer, then $(g,h')$ and $(g',h)$ are not adjacent to $(g,h)$ nor to $(g',h')$ and must have label 1. All the other vertices must be in $V_0$. But, this is a contradiction because $V_1\cup V_2$ induces four isolated vertices. Hence, $(g,h)$ and $(g',h')$ are in the same $G$- or $H$-layer, say in $H^g$. So, $g=g'$. If there exists different $h_1,h_2\in V(H)-\{h,h'\}$, then $(g,h_1), (g,h_2)\in V_1$, since there are no edges between vertices of $H^g$. A contradiction again, due to no existing edges between vertices of $V_1\cup V_2$. If $V(H)|=3$, say $V(H)=\{h,h',h_1\}$, then $f(g,h_1)=1$ and the other vertex $(a,b)$ from $V_1$ must be adjacent to all three vertices from $H^g$. This is not possible as $(a,b)$ is contained in one of the layers $G^h$, $G^{h'}$ or $G^{h_1}$. Again we have a vertex from $V_1\cup V_2$ that is not adjacent to any other vertex of $V_1\cup V_2$, a contradiction. So, $H$ contains only two vertices $h$ and $h'$, which are adjacent and therefore both universal vertices. If both vertices from $V_1$ belong to the same $G$-layer, say $G^h$, then $(g,h)$ is not adjacent to any vertex from $V_1\cup V_2$, which is not possible. So, we may assume that $V_1=\{(g_1,h),(g_2,h')\}$. Clearly $gg_1,gg_2\in E(G)$, so that $V_1\cup V_2$ induces a subgraph without isolated vertices. Also every vertex $(g_3,h)\in V_0$ must be adjacent to $(g,h')$, which means that $gg_3\in E(G)$ and $g$ is an universal vertex of $G$. (Notice also that in the case when $g_1=g_2$, there always exists $g_3\in V(G)-\{g,g_1\}$, because otherwise we have a contradiction with $(ii)$.) This yields the middle case of $(iii)$.

To end with $(iii)$ let $|V_1|=4$ and $|V_2|=1$, where $V_2=\{(g,h)\}$. Let $(g',h')\in V_1$ be a neighbor of $(g,h)$. Clearly all vertices from $G^h\cup H^g$ must be in $V_1\cup V_2$, meaning that one of the factors is $K_2$ and the other contains three vertices, say $H\cong K_2$. Moreover, $g$ must be a universal vertex of $G$. So, either $G\cong C_3$ or $G\cong P_3$, which is the middle case of $(iii)$ and the proof of $(iii)$ is completed.

We continue with $(\Leftarrow)$ of $(iv)$. We may assume that $G$ has exactly one universal vertex $g$, and that $H$ is different from $K_2$ with a universal vertex $h$, and that at most one of $G$ and $H$ is triangle centered. Further, let $g'$ and $h'$ be arbitrary neighbors of $g$ in $G$ and of $h$ in $H$, respectively. By $(i),(ii)$ and $(iii)$ we know that $\gamma_{tR}(G\times H)\geq 7$. If we set $V_2=\{(g,h),(g,h'),(g',h)\}$, $V_1=\{(g',h')\}$ and $V_0=V(G\times H)-(V_1\cup V_2)$, then $f=(V_0,V_1,V_2)$ is a total Roman dominating function with $\omega(f)=7$. Hence, $\gamma_{tR}(G\times H)\leq 7$ and the equality follows.

For $(\Rightarrow)$ of $(iv)$, suppose that $\gamma_{tR}(G\times H)=7$ and that $f=(V_0,V_1,V_2)$ is a $\gamma_{tR}(G\times H)$-function. First assume that $|V_1|=1$ and $|V_2|=3$, where $V_1=\{(g_1,h_1)\}$ and $V_2=\{(g_2,h_2),(g_3,h_3),(g_4,h_4)\}$. We may also assume that $(g_1,h_1)(g_2,h_2),(g_3,h_3)(g_4,h_4)\in E(G\times H)$ as $f$ is a $\gamma_{tR}(G\times H)$-function. Vertices $(g_3,h_4)$ and $(g_4,h_3)$ are not adjacent to $(g_3,h_3)$ nor to $(g_4,h_4)$. If $g_3\neq g_2\neq g_4$, then $(g_2,h_2)$ is adjacent to both $(g_3,h_4)$ and $(g_4,h_3)$ (even if one of them equals to $(g_1,h_1)$). As a consequence, we have $g_2g_3,g_2g_4\in E(G)$ and $h_2h_3,h_2h_4\in E(H)$. In other words, $g_2g_3g_4$ and $h_2h_3h_4$ form a triangle in $G$ and $H$, respectively. Let $g$ be an arbitrary vertex from $V(G)-\{g_2,g_3,g_4\}$ and let $h$ be an arbitrary vertex from $V(H)-\{h_2,h_3,h_4\}$. The vertex $(g,h)$ is adjacent to at least one vertex from $V_2$ (even if $(g,h)=(g_1,h_1)$). Let $(g_i,h_i)$ be a neighbor of $(g,h)$ for some $i\in \{2,3,4\}$. Clearly, $(g_i,h)$ and $(g,h_i)$ are not adjacent to $(g_i,h_i)$. Hence they must be adjacent to $(g_j,h_j)$ for some $j\in\{2,3,4\}-\{i\}$, meaning that $gg_j\in E(G)$ and $hh_j\in E(H)$. We see that both $G$ and $H$ are triangle centered graphs, and by $(iii)$ we have $\gamma_{tR}(G\times H)=6$, a contradiction with $\gamma_{tR}(G\times H)=7$.

So we can assume that either $g_2=g_3$ or $g_2=g_4$, say that $g_2=g_3$. Moreover, also $h_2=h_4$ as otherwise $(g_2,h_4)$ has no neighbor of label $2$. If $h_2$ is not adjacent to some vertex $h\in V(H)$, then $(g_2,h)$ is not adjacent to a vertex of label $2$, meaning that $h_2$ is a universal vertex of $H$. Similarly, we see that $g_2$ is a universal vertex of $G$. We have $\gamma_{tR}(G\times H)=6$ by $(iii)$ when both $G$ and $H$ have (at least) two universal vertices, or one is $K_2$ and the other contains a universal vertex, a contradiction. Hence, one of $G$ or $H$ has at most one universal vertex and the other is not $K_2$ and we are done in this case.

The second possibility is that $|V_1|=3$ and $|V_2|=2$, where $V_1=\{(g_1,h_1),(g_2,h_2),(g_3,h_3)\}$ and $V_2=\{(g_4,h_4),(g_5,h_5)\}$. If $(g_4,h_4)$ and $(g_5,h_5)$ are adjacent, then $(g_4,h_5),(g_5,h_4)\in V_1$, say $(g_4,h_5)=(g_2,h_2)$ and $(g_5,h_4)=(g_3,h_3)$. Suppose that $g_1\notin \{g_4,g_5\}$ and $h_1\notin \{h_4,h_5\}$. All the vertices of $G^{h_4}-\{(g_4,h_4),(g_5,h_4)\}$ must be in $V_0$ and adjacent to $(g_5,h_5)$, meaning that $g_5$ is a universal vertex of $G$. Similarly, all the vertices of $G^{h_5}-\{(g_4,h_5),(g_5,h_5)\}$ must be in $V_0$ and adjacent to $(g_4,h_4)$, meaning that $g_4$ is a universal vertex of $G$. This means that $G$ is triangle centered with central triangle $g_1g_4g_5$. By symmetric arguments $H$ is triangle centered with central triangle $h_1h_4h_5$. By $(iii)$ we have $\gamma_{tR}(G\times H)=6$, a contradiction. So, either $h_1\in\{h_4,h_5\}$ or $g_1\in \{g_4,g_5\}$, say $h_1=h_4$. By the same arguments as above, we see that $g_4$ is a universal vertex of $G$, and that $h_4$ and $h_5$ are universal vertices of $H$. (Notice that $g_1$ is not adjacent to $g_5$, otherwise also $g_5$ is universal vertex, a contradiction with $(iii)$.) If $H\cong K_2$, then we have $\gamma_{tR}(G\times H)=6$ by $(iii)$, a contradiction. Otherwise $H\ncong K_2$ and we are done.

Now we can assume that $(g_4,h_4)$ and $(g_5,h_5)$ are not adjacent. If $g_4\neq g_5$ and $h_4\neq h_5$, then, as in the previous paragraph, we can choose the notation such that $(g_4,h_5)=(g_2,h_2)$ and that $(g_5,h_4)=(g_3,h_3)$. Moreover, $(g_1,h_1)$ must be adjacent to all other vertices from $V_1\cup V_2$ in order to avoid isolated vertices of positive label. Vertices $(g_5,h_1)$ and $(g_1,h_4)$ are from $V_0$ and must have a neighbor in $V_2$. The only possibility is that $(g_5,h_1)$ is adjacent to $(g_4,h_4)$ and $(g_1,h_4)$ is adjacent to $(g_5,h_5)$. The mentioned edges imply that $g_4g_5\in E(G)$ and $h_4h_5\in E(H)$, a contradiction with the not adjacency of $(g_4,h_4)$ and $(g_5,h_5)$. It remains that $(g_4,h_4)$ and $(g_5,h_5)$ belong to the same layer, say $H^{g_4}$, that is $g_4=g_5$. Every vertex from $H^{g_4}-\{(g_4,h_4),(g_4,h_5)\}$ is not adjacent to a vertex of label $2$ and must poses label $1$. We need also at least two vertices of label $1$ outside of $H^{g_4}$ to assure non isolated vertices in $V_1\cup V_2$. This means that $|V(H)|\leq 3$. Every vertex from $G^{h_4}-\{(g_4,h_4)\}$ must be adjacent to $(g_4,h_5)$ and $g_4$ is a universal vertex of $G$. If $H\cong K_2$, then we have a contradiction with $(iii)$. So either $H\cong P_3$ or $H\cong C_3$, meaning that also $H$ has a universal vertex and the second possibility is done.

The last option is that $|V_1|=5$ and $|V_2|=1$, where $V_2=\{(g,h)\}$. Clearly all vertices from $G^h\cup H^g$ must be in $V_1\cup V_2$ and $g$ and $h$ must be universal vertices of $G$ and $H$, respectively. We either obtain a contradiction with $(iii)$ (when one factor is $K_2$) or obtain that $G\cong H\cong K_{1,2}$ which yields the desired situation and the proof of $(iv)$ is completed.

We conclude this proof with $(v)$. We have $\gamma_{tR}(G\times H)\geq 8$ from assertions $(i)-(iv)$. Let $D_G=\{g,g'\}$ be a $\gamma_t(G)$-set and $D_H=\{h,h'\}$ be a $\gamma_t(H)$-set. We set $V_2=D_G\times D_H$, $V_1=\emptyset$ and $V_0=V(G\times H)-V_2$ and claim that $f=(V_0,V_1,V_2)$ is a total Roman dominating function on $G\times H$. Let $(g_1,h_1)\in V_0$. Clearly, $g_1$ is neighbor of $g$ or of $g'$, say of $g$, and $h_1$ is neighbor of $h$ or $h'$, say $h$. Therefore $(g_1,h_1)$ is neighbor of $(g,h)$ and $f$ satisfies the conditions to be total Roman dominating for $G\times H$. Hence, the inequality $\gamma_{tR}(G\times H)\leq 8$ is obtained, which leads to the claimed equality.
\end{proof}

A \emph{wheel} graph $W_n$, $n\geq 4$, is a join of $K_1$ and $C_{n-1}$ and a \emph{fan} graph $F_n$, $n\geq 2$, is a join of $K_1$ and $P_{n-1}$. Clearly $W_n$ and $F_n$ have exactly one universal vertex when $n>4$. In particular, $W_n$ and $F_n$ are triangle centered whenever $n\in \{4,5\}$. For a complete graph $K_n$ and a maximum matching $M$ of it, the graph $K_n-M$, $n\geq 5$, is a triangle centered graph with a universal vertex whenever $n$ is an odd number. By using Theorem \ref{small} we directly obtain the next results (among others).

\begin{corollary}\label{exact} For integers $n,m>5$, $p\geq 1$, $q,s,t\geq 2$, $r>2$ and maximum matchings $M$ and $M'$ we have
\begin{itemize}
\item[(i)] $\gamma_{tR}(K_r\times K_s)=6$;
\item[(ii)] $\gamma_{tR}(K_{1,s}\times K_{1,t})=7$;
\item[(iii)] $\gamma_{tR}(K_{p,q}\times K_{s,t})=8$;
\item[(iv)] $\gamma_{tR}(K_q\times K_{s,t})=8$;
\item[(v)] $\gamma_{tR}(K_r\times W_n)=7$;
\item[(vi)] $\gamma_{tR}(K_r\times F_n)=7$;
\item[(vii)] $\gamma_{tR}(W_n\times F_m)=8$;
\item[(viii)] $\gamma_{tR}(W_n\times W_m)=8$;
\item[(ix)] $\gamma_{tR}(F_n\times F_m)=8$;
\item[(x)] $\gamma_{tR}((K_n-M)\times (K_m-M'))=6$.
\end{itemize}
\end{corollary}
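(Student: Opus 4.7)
The plan is to derive each of items (i)--(x) directly from the appropriate clause of Theorem \ref{small}, so the bulk of the work reduces to cataloguing the set of universal vertices, the triangle centredness, and the total domination number for the five graph families in play. First I would record the following dictionary. In $K_n$ with $n\ge 2$ every vertex is universal, $\gamma_t(K_n)=2$, and $K_n$ is triangle centered for $n\ge 3$. In $K_{a,b}$ with $a,b\ge 1$ a vertex is universal iff its own part has size one; being bipartite, $K_{a,b}$ is triangle-free and hence not triangle centered, and $\gamma_t(K_{a,b})=2$. For $W_n=K_1+C_{n-1}$ and $F_n=K_1+P_{n-1}$ with $n>5$ the hub is the unique universal vertex, $\gamma_t=2$, and the graph fails to be triangle centered: every triangle must consist of the hub together with two consecutive outer vertices $c_i,c_{i+1}$, yet since $n-1\ge 5$ there is an outer vertex adjacent to the hub but to neither $c_i$ nor $c_{i+1}$. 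Finally, in $K_n-M$ with $n\ge 5$ and a maximum matching $M$, three pairwise unmatched vertices $x,y,z$ always exist and form a triangle, and any other vertex is incident to at most one edge of $M$, so it is adjacent to at least two of $x,y,z$; thus $K_n-M$ is triangle centered, and when $n$ is odd the unmatched vertex is universal.

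With this dictionary each item is matched to a clause. Items (i) and (x) invoke Theorem \ref{small}(iii): in (i) both complete factors have at least two universal vertices and $|V(K_r)|\ge 3$, while in (x) both $K_n-M$ and $K_m-M'$ are triangle centered. Items (ii), (v), (vi) invoke Theorem \ref{small}(iv): the two stars in (ii) each have a unique universal vertex, neither equals $K_2$, and neither is triangle centered; in (v) and (vi) the complete factor $K_r$ has $r$ universal vertices and is triangle centered, whereas $W_n$ (respectively $F_n$) has exactly one universal vertex and is not triangle centered, so the required asymmetry is met. Items (iii), (iv), (vii), (viii), (ix) invoke Theorem \ref{small}(v): in (iii) and (iv) the factor $K_{s,t}$, and also $K_{p,q}$ when $\min\{p,q\}\ge 2$, has no universal vertex (so at most one factor has one), both factors satisfy $\gamma_t=2$, and the bipartite factor is not triangle centered; for (vii)--(ix) one verifies that the hypotheses of the stronger clauses (iii) and (iv) are not met, after which the upper bound $2\gamma_t(G)\gamma_t(H)=8$ supplied by Corollary \ref{total} together with the lower bound forced by the exclusion of the smaller values yields $\gamma_{tR}(G\times H)=8$.

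The main obstacle will be the triangle-centredness arguments: exhibiting the suitable triangle in $K_n-M$ and proving the two-neighbor property for all remaining vertices, and conversely ruling out any central triangle in $W_n$ and $F_n$ for $n>5$ by pinpointing a far-away outer vertex that fails the condition. Everything else is bookkeeping with universal vertices, $\gamma_t$, and the direct matching of hypotheses to Theorem \ref{small}.
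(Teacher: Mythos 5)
Your dictionary of universal vertices, triangle centredness and $\gamma_t$ for the five families is accurate, and items (i), (ii), (v), (vi), (x), as well as (iii) and (iv), do follow from Theorem \ref{small} exactly as you describe; this is also essentially all the paper does, since it gives no proof beyond ``by using Theorem \ref{small}''. The genuine gap is in your treatment of (vii)--(ix). Theorem \ref{small}(v) requires that \emph{at most one} of the two factors has a universal vertex, but for $n,m>5$ each of $W_n$ and $F_n$ has a universal vertex (its hub), so clause (v) simply does not apply to $W_n\times W_m$, $W_n\times F_m$ or $F_n\times F_m$. Your fallback for these three items --- ``one verifies that the hypotheses of the stronger clauses (iii) and (iv) are not met'' --- does not go through either: for these products both factors have a universal vertex, each has exactly one, neither is $K_2$, and at most one (in fact neither) is triangle centered, which is precisely the hypothesis under which the $(\Leftarrow)$ direction of Theorem \ref{small}(iv) is proved.

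Worse, the explicit function constructed in that proof applies verbatim here: with $g,h$ the two hubs and $g',h'$ rim neighbours, setting $V_2=\{(g,h),(g,h'),(g',h)\}$ and $V_1=\{(g',h')\}$ gives a total Roman dominating function of weight $7$ on $W_n\times W_m$ (every $(x,y)$ with $x\ne g$ and $y\ne h$ is adjacent to $(g,h)$ because $g$ and $h$ are universal; the remaining zero-labelled vertices $(g,y)$ are adjacent to $(g',h)$ and the $(x,h)$ to $(g,h')$; and $V_1\cup V_2$ induces two edges). Hence $\gamma_{tR}(W_n\times W_m)\le 7$, and similarly for $W_n\times F_m$ and $F_n\times F_m$, so the claimed value $8$ in (vii)--(ix) cannot be extracted from Theorem \ref{small}; it in fact contradicts clause (iv) and appears to be an error in the statement of the corollary itself. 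The step in your proposal that is supposed to supply the lower bound $\ge 8$ for these three items is therefore the place where the argument fails, and no bookkeeping with Corollary \ref{total} can repair it.
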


With the help from Corollary \ref{exact}, we can comment the sharpness for most of the bounds from Section \ref{section-general}. The upper bounds of Theorem \ref{th:general bounds}, of Corollary \ref{cor1} and of Remark \ref{remark} are sharp by $(ii)$ of Corollary \ref{exact}. The upper bound from Corollary \ref{total} is sharp by $(iii),(iv),(vii),(viii)$ and $(ix)$ of Corollary \ref{exact}. For $p=q=s=t=2$ we have $\gamma_{tR}(K_{2,2}\times K_{2,2})=\gamma_{tR}(C_4\times C_4)=8$ by $(iii)$ of Corollary \ref{exact}, and so for Corollary \ref{cor2}, its upper bound is sharp. The lower bound from Theorem \ref{th:general bounds} follows from $\gamma_{tR}(P_4\times P_4)=8=\rho(P_4)\gamma_{tR}(P_4)$ which holds by $(v)$ of Theorem \ref{small}. By $(iii)$ of Corollary \ref{exact}, we show the sharpness of the bounds from Theorems \ref{th:triangle-free-bipar} and \ref{th:triangle-free-regular} and Corollary \ref{th:two-EOD}. In conclusion, only the tightness of the bound presented in Theorem \ref{EOD} remains open.

We end this section with an alternative presentation with respect to Theorem \ref{small}, where we consider the number of vertices in $V_1\cup V_2$ of a total Roman dominating function. For the minimum cardinality of $V_1\cup V_2$, we need an additional condition that the cardinality of $V_2$ must be maximum to be able to characterize them.

\begin{theorem}\label{triangel}
Let $f=(V_0,V_1,V_2)$ be a $\gamma_{tR}(G\times H)$-function with the largest possible cardinality for $V_2$, where $G$ and $H$ are two graphs of order at least three. The next items are equivalent.
\begin{itemize}
\item[(i)] Graphs $G$ and $H$ are triangle centered.
\item[(ii)] $\gamma_{tR}(G\times H)=6$.
\item[(iii)] $|V_1\cup V_2|=3$.
\end{itemize}
\end{theorem}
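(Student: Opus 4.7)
The plan is to use Theorem \ref{small}(iii) as the main engine, together with the weight identity $\omega(f)=2|V_2|+|V_1|$, to cycle through the three conditions. Since the standing hypothesis $|V(G)|,|V(H)|\geq 3$ already forces $\gamma_{tR}(G\times H)\geq 6$ by Theorem \ref{small}(i)--(ii), the substantive content lies in identifying when equality is attained and what the corresponding extremal functions must look like.

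I would first establish (i) $\Leftrightarrow$ (ii). The implication (i) $\Rightarrow$ (ii) is immediate from Theorem \ref{small}(iii). For the converse, the $K_2$ branch of Theorem \ref{small}(iii) is ruled out by the order hypothesis, leaving only the branches where either both factors are triangle centered, or both have at least two universal vertices each. The key observation is that the second situation is already a special case of the first: if $u,u'$ are two universal vertices of $G$ and $w$ is any third vertex (which exists because $|V(G)|\geq 3$), then $uu'w$ is a triangle and every vertex of $G$ is adjacent to the two universal vertices $u,u'$ among $\{u,u',w\}$; hence $G$ is triangle centered with central triangle $uu'w$, and the same argument handles $H$.

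For (ii) $\Leftrightarrow$ (iii) I would argue as follows. If $|V_1\cup V_2|=3$, then $\omega(f)=2|V_2|+|V_1|\leq 2|V_1\cup V_2|=6$, and combined with the lower bound $\gamma_{tR}(G\times H)\geq 6$ this gives (iii) $\Rightarrow$ (ii). Conversely, assuming $\gamma_{tR}(G\times H)=6$, the already-proved implication (ii) $\Rightarrow$ (i) lets me apply the explicit construction from the triangle-centered branch of Theorem \ref{small}(iii) to produce a $\gamma_{tR}(G\times H)$-function with $|V_2|=3$ and $V_1=\emptyset$. Since the weight identity forces $|V_2'|\leq 3$ for every $\gamma_{tR}(G\times H)$-function $f'=(V_0',V_1',V_2')$, the maximum possible value of $|V_2|$ is exactly $3$; our chosen $f$ attains this maximum, whence $|V_1|=0$ and $|V_1\cup V_2|=3$.

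The main obstacle is essentially bookkeeping: verifying that, under the order hypothesis, the two non-$K_2$ branches of Theorem \ref{small}(iii) coincide, so that the ``triangle centered'' condition in (i) captures the entire $\gamma_{tR}(G\times H)=6$ regime rather than only a proper subcase.
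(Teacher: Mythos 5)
Your proof is correct, but it takes a genuinely different route from the paper for the harder implications. The paper proves the cycle $(i)\Rightarrow(ii)\Rightarrow(iii)\Rightarrow(i)$, and both $(ii)\Rightarrow(iii)$ and $(iii)\Rightarrow(i)$ are done by direct structural case analysis on the labels of $f$ (enumerating adjacencies among the vertices of $V_1\cup V_2$, locating forced zeros in layers, and in one subcase invoking the maximality of $|V_2|$ to build a contradiction by relabelling a vertex from $1$ to $2$). You instead prove $(i)\Leftrightarrow(ii)$ and $(ii)\Leftrightarrow(iii)$, leaning entirely on the already-established characterization in Theorem~\ref{small}$(iii)$: your key new observation --- that a graph of order at least three with two universal vertices $u,u'$ is automatically triangle centered with central triangle $uu'w$ for any third vertex $w$ --- collapses the surviving branches of that characterization into the single condition ``both factors triangle centered,'' which the paper never states explicitly. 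Your $(ii)\Rightarrow(iii)$ then follows from the weight identity $6=2|V_2'|+|V_1'|$ (so $|V_2'|\le 3$ for every optimal function) together with the explicit weight-$6$ function with $|V_2|=3$, $V_1=\emptyset$ supplied by the triangle-centered construction, so the maximality hypothesis on $|V_2|$ forces $|V_2|=3$ and $|V_1|=0$. What your approach buys is a substantially shorter argument with no case analysis; what the paper's approach buys is a self-contained structural description of the extremal function that does not route through the full strength of Theorem~\ref{small}$(iii)$'s converse. Both arguments use the maximality of $|V_2|$ in an essential way, and your chain of implications does establish the full three-way equivalence.
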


\begin{proof}
The direction $((i)\Rightarrow (ii))$ follows from $(iii)$ of Theorem \ref{small}.

For the direction $((ii)\Rightarrow (iii))$, let $\gamma_{tR}(G\times H)=6$ where $f=(V_0,V_1,V_2)$ is a $\gamma_{tR}(G\times H)$-function with maximum cardinality of $V_2$. There exist vertices from $G\times H$ in $V_0$ as there are at least nine vertices in $G\times H$. Consequently $V_2\neq\emptyset$. Let $(g,h)\in V_2$ and let $(g',h')$ be a neighbor of $(g,h)$ with $f(g',h')>0$. There exists at least one vertex $(x,y)$ from $(G^h\cup H^g)-\{(g,h)\}$ of label 0, because $\gamma_{tR}(G\times H)=6$. Suppose that $(g'',h'')$ is a neighbor of $(x,y)$ of label 2. Assume first that $(g',h')=(g'',h'')$. The vertices $(g',h)$ and $(g,h')$ are not adjacent to $(g',h')$ nor to $(g,h)$. If they have label equal to $1$, then all the other vertices have label $0$ and every vertex is adjacent to $(g,h)$ or to $(g',h')$. Let $g_1$ and $h_1$ be a third vertex of $G$ and $H$, respectively. Clearly, $(g_1,h')$ and $(g',h_1)$ are adjacent to $(g,h)$ and with this, we have $gg_1\in E(G)$ and $hh_1\in E(H)$. Similarly, $(g,h_1)$ and $(g_1,h)$ are adjacent to $(g',h')$, and with this we get $g'g_1\in E(G)$ and $h'h_1\in E(H)$. Let us define $f'=(V'_0,V'_1,V'_2)$ where $V'_0=(V_0\cup V_1)-\{(g_1,h_1)\}$, $V'_1=\emptyset$ and $V'_2=V_2\cup \{(g_1,h_1)\}$. Clearly, $f'$ is a total Roman dominating function with $|V'_2|>|V_2|$, a contradiction with the choice of $f$. Therefore, the label of $(g',h)$ and $(g,h')$ must be $0$ and there exists a third vertex $(g_2,h_2)$ of label $2$ that is adjacent to $(g',h)$ and $(g,h')$. From $\gamma_{tR}(G\times H)=6$ it follows that $|V_1\cup V_2|=3$.

Next we assume that $(g',h')\neq(g'',h'')$. If also $f(g',h')=2$, then $V_2=\{(g,h),(g',h'),(g'',h'')\}$ and $V_1=\emptyset$ and we are done. So let $f(g',h')=1$. Because $\gamma_{tR}(G\times H)=6$ there exists a fourth vertex $(a,b)$ in $V_1\cup V_2$ with $f(a,b)=1$ and all other vertices are in $V_0$. Vertex $(g'',h'')$ is not from $G^h\cup H^g$, because $V_2$ contains only $(g,h)$ and $(g'',h'')$ and we have at least three vertices in every $G$- or $H$-layer. Hence, $g\neq g''$ and $h\neq h''$. Vertices $(g,h'')$ and $(g'',h)$ are not adjacent to $(g,h)$ nor to $(g'',h'')$, and must therefore have label $1$. This leads to $\{(g'',h),(g,h'')\}=\{(g',h'),(a,b)\}$, and this is not possible since $(g',h')$ is adjacent to $(g,h)$. Hence, $|V_1\cup V_2|=3$ in all cases and this implication is done.

$((iii)\Rightarrow (i))$ Let $|V_1\cup V_2|=3$ and let $(g_1,h_1),(g_2,h_2),(g_3,h_3)\in V_1\cup V_2$. As $V_1\cup V_2$ induces a graph without isolated vertices, one vertex of these mentioned three, say $(g_2,h_2)$, must be adjacent to the other two. Thus, $g_1g_2,g_2,g_1\in E(G)$ and $h_1h_2,h_2,h_3\in E(H)$. If $g_1g_3\notin E(G)$, then $(g_1,h_2)$ is a vertex that is labeled with $0$ being not neighbor of a vertex belonging to $V_2$. Similarly, if $h_1h_3\notin E(H)$, then $(g_2,h_1)$ is a vertex whose label is equal to $0$ being not neighbor of one vertex from $V_2$. Hence $g_1g_2g_3$ and $h_1h_2h_3$ form a triangle in $G$ and $H$, respectively. Suppose there is a vertex $g\in V(G)$ which is either neighbor of exactly one vertex of $\{g_1,g_2,g_3\}$, say to $g_1$, or to no vertex of $\{g_1,g_2,g_3\}$. In both cases the vertex $(g,h_1)$ has label $0$ and is not adjacent to any vertex of $V_1\cup V_2$, which is not possible since $f$ is a function which is total Roman dominating. Hence, every vertex $g\in V(G)$ is adjacent to two or more vertices from $\{g_1,g_2,g_3\}$ and $G$ is triangle centered. Similarly, one shows that $H$ is triangle centered.
\end{proof}


\section{A general lower bound and its consequences on the direct product}

The following lower bound for $\gamma_{tR}(G)$ depends on the order of $G$ and its maximum degree $\Delta(G)$ as well as on a $\gamma_{tR}(G)$-function.

\begin{theorem}\label{genlower}
If $f=(V_0,V_1,V_2)$ is a $\gamma_{tR}(G)$-function of a graph $G$, then $\gamma_{tR}(G)\geq |V(G)|-(\Delta(G)-2)|V_2|$ and $|V_2|\geq\frac{|V(G)|-|V_1|}{\Delta(G)}$. Moreover, if in addition $|V(G)|=\Delta(G)|V_2|+|V_1|$, then the equality $\gamma_{tR}(G)=|V(G)|-(\Delta(G)-2)|V_2|$ holds.
\end{theorem}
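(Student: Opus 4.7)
The plan is a straightforward double count of edges between $V_0$ and $V_2$, exploiting both defining properties of a total Roman dominating function. Write $|V_0|=|V(G)|-|V_1|-|V_2|$ and let $e(V_0,V_2)$ denote the number of edges of $G$ with one endpoint in $V_0$ and the other in $V_2$.

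First I would bound $e(V_0,V_2)$ from below: since $f$ is Roman dominating, every vertex of $V_0$ has at least one neighbor in $V_2$, so $e(V_0,V_2)\ge |V_0|$. Then I would bound $e(V_0,V_2)$ from above using the \emph{total} condition: since $V_1\cup V_2$ induces a subgraph of minimum degree at least one, every $v\in V_2$ has at least one neighbor outside $V_0$. Hence each $v\in V_2$ has at most $\Delta(G)-1$ neighbors in $V_0$, giving $e(V_0,V_2)\le(\Delta(G)-1)|V_2|$. Combining these two inequalities yields
\[
|V(G)|-|V_1|-|V_2|=|V_0|\le(\Delta(G)-1)|V_2|,
\]
i.e.\ $|V(G)|-|V_1|\le\Delta(G)|V_2|$, which is precisely the second stated inequality $|V_2|\ge(|V(G)|-|V_1|)/\Delta(G)$.

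For the first inequality, I would simply rearrange: since $\omega(f)=2|V_2|+|V_1|$ and $f$ is a $\gamma_{tR}(G)$-function,
\[
\gamma_{tR}(G)=2|V_2|+|V_1|=\bigl(\Delta(G)|V_2|+|V_1|\bigr)-(\Delta(G)-2)|V_2|\ge |V(G)|-(\Delta(G)-2)|V_2|,
\]
where the inequality is exactly the one established in the previous step. Finally, the equality clause is immediate from this same chain of identities: if in addition $|V(G)|=\Delta(G)|V_2|+|V_1|$, then $\Delta(G)|V_2|+|V_1|$ can be replaced by $|V(G)|$ in the above display, turning the inequality into $\gamma_{tR}(G)=|V(G)|-(\Delta(G)-2)|V_2|$. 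There is no real obstacle here; the only point that must be made carefully is why each vertex of $V_2$ has at most $\Delta(G)-1$ neighbors in $V_0$, which is precisely the content of the total (as opposed to merely Roman) condition imposed on $f$.
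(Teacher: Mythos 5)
Your proof is correct and follows essentially the same route as the paper: the key step in both is that the total condition forces each vertex of $V_2$ to have at most $\Delta(G)-1$ neighbors in $V_0$, which combined with the Roman condition yields $|V_0|\le(\Delta(G)-1)|V_2|$, and everything else is algebraic rearrangement. Your explicit double count of $e(V_0,V_2)$ just makes transparent what the paper leaves implicit.
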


\begin{proof}
Assume $g=(V_0,V_1,V_2)$ is a $\gamma_{tR}(G)$-function. Every vertex from $V_2$ must have one neighbor in $V_1\cup V_2$. This means that every  vertex from $V_2$ has no more than $\Delta(G)-1$ adjacent vertices in $V_0$. With this we have
\begin{equation}
|V(G)|=|V_0|+|V_1|+|V_2|\leq (\Delta(G)-1)|V_2|+|V_1|+|V_2|. \label{one}
\end{equation}
From (\ref{one}) we extract $|V_2|$ and obtain the second inequality
\begin{equation*}
|V_2|\geq\frac{|V(G)|-|V_1|}{\Delta(G)}.
\end{equation*}
Notice that from (\ref{one}), it follows $|V_2|$ is maximum when $|V_1|=0$. Now we return to (\ref{one}), and add $0=|V_2|-|V_2|$ on the right side to get
\begin{equation}
|V(G)|\leq (\Delta(G)-2)|V_2|+2|V_2|+|V_1|=|V_2|(\Delta(G)-2)+\gamma_{tR}(G), \label{two}
\end{equation}
that yields the first inequality. Notice that from the additional condition $|V(G)|=\Delta(G)|V_2|+|V_1|$ we get
\begin{equation*}
|V_0|+|V_1|+|V_2|=|V(G)|=\Delta(G)|V_2|+|V_1|
\end{equation*}
and consequently $|V_0|=(\Delta(G)-1)|V_2|$. This connection gives the equality in the lines (\ref{one}) and (\ref{two}) and the proof is completed.
\end{proof}

If we rewrite the Theorem \ref{genlower} for the direct product $G\times H$, then we have the following.

\begin{corollary}\label{lower1}
Let $G$ and $H$ be any two graphs. If $g=(V'_0,V'_1,V'_2)$ is a $\gamma_{tR}(G\times H)$-function, then $\gamma_{tR}(G\times H)\geq |V'(G)||V'(H)|-(\Delta(H)\Delta(G)-2)|V'_2|$ and $|V'_2|\geq\frac{|V'(G)||V'(H)|-|V'_1|}{\Delta(H)\Delta(G)}$. Moreover, if in addition $|V'(G)||V'(H)|=\Delta(H)\Delta(G)|V'_2|+|V'_1|$, then the equality $\gamma_{tR}(G\times H)=|V'(G)||V'(H)|-(\Delta(H)\Delta(G)-2)|V'_2|$ holds.
\end{corollary}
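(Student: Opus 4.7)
The plan is to recognize that Corollary \ref{lower1} is a direct specialization of Theorem \ref{genlower} to the graph $G\times H$, translated through two structural identities for the direct product that were already established earlier in the excerpt: the order of the product is $|V(G\times H)|=|V(G)|\cdot|V(H)|$, and the maximum degree is $\Delta(G\times H)=\Delta(G)\Delta(H)$, the latter being an immediate consequence of the neighborhood formula $N_{G\times H}(g,h)=N_G(g)\times N_H(h)$ from (\ref{neighb}). (I interpret the symbols $|V'(G)|$ and $|V'(H)|$ in the statement as $|V(G)|$ and $|V(H)|$, since the prime notation in the corollary is otherwise reserved for the partition $(V'_0,V'_1,V'_2)$ of the total Roman dominating function.)

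First I would apply Theorem \ref{genlower} to the graph $G\times H$ together with the given $\gamma_{tR}(G\times H)$-function $g=(V'_0,V'_1,V'_2)$. This yields directly the two inequalities
$$\gamma_{tR}(G\times H)\geq |V(G\times H)|-(\Delta(G\times H)-2)|V'_2|\quad\text{and}\quad |V'_2|\geq \frac{|V(G\times H)|-|V'_1|}{\Delta(G\times H)}.$$
Substituting $|V(G\times H)|=|V(G)||V(H)|$ and $\Delta(G\times H)=\Delta(G)\Delta(H)$ in both expressions gives the two bounds claimed by the corollary.

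For the equality statement, I would simply observe that the hypothesis $|V(G)||V(H)|=\Delta(G)\Delta(H)|V'_2|+|V'_1|$ is exactly the hypothesis $|V(G\times H)|=\Delta(G\times H)|V'_2|+|V'_1|$ of Theorem \ref{genlower} after the same two substitutions. Hence the equality conclusion of Theorem \ref{genlower}, namely $\gamma_{tR}(G\times H)=|V(G\times H)|-(\Delta(G\times H)-2)|V'_2|$, applies and becomes precisely the desired equality once rewritten in terms of the factors.

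There is no real obstacle to overcome: the corollary is a transcription of Theorem \ref{genlower}, and the argument reduces to invoking that theorem and applying the two known product identities. The only point worth a line of comment in the written proof is to record explicitly where $|V(G\times H)|=|V(G)||V(H)|$ and $\Delta(G\times H)=\Delta(G)\Delta(H)$ come from, so that the reader can verify the substitution without effort.
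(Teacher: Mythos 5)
Your proposal is correct and matches the paper exactly: the paper presents Corollary \ref{lower1} as an immediate rewriting of Theorem \ref{genlower} for $G\times H$, using precisely the identities $|V(G\times H)|=|V(G)||V(H)|$ and $\Delta(G\times H)=\Delta(G)\Delta(H)$ that you invoke. Your reading of the primed notation $|V'(G)|$, $|V'(H)|$ as $|V(G)|$, $|V(H)|$ is also the intended one.
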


The lower bound from Theorem \ref{genlower} is better when $|V_2|$ is small as possible. Also, one cannot expect that the mentioned bound behave well when there exists a small quantity of vertices with maximum number of neighbors in $G$. From this point of view, one can expect that Theorem \ref{genlower} works at its best for regular graphs. To see this, the following known remark is necessary.

\begin{remark}\label{remark-efficient-open-graph}{\rm \cite{KuPeYe1}}
If $S$ is an efficient open dominating set of an efficient open domination graph $G$, then $S$ is a $\gamma_t(G)$-set.
\end{remark}

\begin{theorem}\label{regular}
If $G$ is a regular efficient open domination graph, then $\gamma_{tR}(G)=2\gamma_t(G)$.
\end{theorem}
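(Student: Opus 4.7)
The plan is to prove the two inequalities separately. The upper bound $\gamma_{tR}(G)\le 2\gamma_t(G)$ is routine: for any $\gamma_t(G)$-set $D$, the function $(V(G)\setminus D,\emptyset,D)$ is a total Roman dominating function of weight $2\gamma_t(G)$, since every vertex outside $D$ has a neighbor in $D$ (because $D$ is a dominating set), and $D$ has no isolated vertex in its induced subgraph (because $D$ is a total dominating set). This is essentially the argument already used to derive Corollary \ref{total}.

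For the matching lower bound, I would first exploit regularity together with efficient open domination to pin down $|V(G)|$ in terms of $\gamma_t(G)$. Let $S$ be an efficient open dominating set of $G$; by Remark \ref{remark-efficient-open-graph}, $|S|=\gamma_t(G)$. The open packing property makes the neighborhoods $\{N_G(s):s\in S\}$ pairwise disjoint, while the total domination property forces their union to be $V(G)$. Hence these sets partition $V(G)$, and because $G$ is $r$-regular with $r=\Delta(G)$, each has size exactly $r$. This yields the key identity
$$|V(G)|=\Delta(G)\,\gamma_t(G).$$

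Now let $f=(V_0,V_1,V_2)$ be a $\gamma_{tR}(G)$-function. Theorem \ref{genlower} gives $\gamma_{tR}(G)\ge|V(G)|-(\Delta(G)-2)|V_2|$. Since $\gamma_{tR}(G)=|V_1|+2|V_2|\ge 2|V_2|$, we have $|V_2|\le\gamma_{tR}(G)/2$; and since $\Delta(G)-2\ge 0$ in the non-trivial case $\Delta(G)\ge 2$, this upper estimate on $|V_2|$ can be plugged in in the correct direction to obtain
$$\gamma_{tR}(G)\ge|V(G)|-\frac{\Delta(G)-2}{2}\gamma_{tR}(G).$$
Rearranging gives $\frac{\Delta(G)}{2}\gamma_{tR}(G)\ge|V(G)|=\Delta(G)\gamma_t(G)$, that is, $\gamma_{tR}(G)\ge 2\gamma_t(G)$, which together with the upper bound delivers equality. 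The only mildly delicate step is recognizing the identity $|V(G)|=\Delta(G)\gamma_t(G)$ for regular efficient open domination graphs; once this is in hand, Theorem \ref{genlower} together with the trivial estimate $|V_2|\le\gamma_{tR}(G)/2$ closes the argument at once.
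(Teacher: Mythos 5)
Your proof is correct and rests on the same two ingredients as the paper's: the total Roman dominating function $(V(G)\setminus D,\emptyset,D)$ built from an efficient open dominating set $D$, and Theorem~\ref{genlower}. The difference lies in how the lower bound is extracted. The paper observes that the constructed function satisfies $|V(G)|=\Delta(G)|V_2|+|V_1|$ and then invokes the ``Moreover'' equality clause of Theorem~\ref{genlower} in one stroke; you instead apply the first inequality of Theorem~\ref{genlower} to a genuine $\gamma_{tR}(G)$-function and combine it with the trivial estimate $|V_2|\le\gamma_{tR}(G)/2$ and the identity $|V(G)|=\Delta(G)\gamma_t(G)$ (which you correctly derive from the partition of $V(G)$ into the pairwise disjoint open neighborhoods of the vertices of $D$). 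Your route is slightly longer but arguably cleaner: the equality clause of Theorem~\ref{genlower} is stated for $\gamma_{tR}(G)$-functions, whereas the function the paper feeds into it is only known a priori to be \emph{some} total Roman dominating function, so your version sidesteps that looseness. One caveat: your rearrangement genuinely needs $\Delta(G)\ge 2$, and for $\Delta(G)=1$ (a disjoint union of copies of $K_2$) the statement itself fails, since $\gamma_{tR}(K_2)=2<4=2\gamma_t(K_2)$; this edge case is overlooked by the paper as well, so it does not count against your argument, but it deserves an explicit exclusion rather than the label ``trivial''.
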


\begin{proof}
Let $D$ be an efficient open dominating set of an $r$-regular graph $G$. By Remark \ref{remark-efficient-open-graph} we have that $D$ is a $\gamma_t(G)$-set. Hence, $f=(V_0,V_1,V_2)=(V(G)-D,\emptyset,D)$ is a total Roman dominating function on $G$ of weight $\omega(f)=2\gamma_t(G)$ that clearly fulfills the condition $|V(G)|=\Delta(G)|V_2|+|V_1|=r|D|$. By Theorem \ref{genlower} the result follows.
\end{proof}

For two graphs $G$ and $H$, its direct product $G\times H$ represents an efficient open domination graph whenever both $G$ and $H$ contains efficient open dominating sets. This was proved in \cite{AbHamTay}. Moreover, for the two efficient open dominating sets $D_G$ and $D_H$ of $G$ and $H$, respectively, the set $D_G\times D_H$ is an efficient open dominating set of $G\times H$. Hence we have the following result.

\begin{corollary}\label{EOD2}
If $G$ and $H$ are regular graphs and they are also efficient open domination graphs, then $\gamma_{tR}(G\times H)=2\gamma_t(H)\gamma_t(G)$.
\end{corollary}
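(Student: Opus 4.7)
The plan is to chain together three facts already in the paper so that Theorem \ref{regular} can be applied directly to the product. First I would verify that $G\times H$ itself is a regular efficient open domination graph. Regularity is immediate from the degree formula $\delta_{G\times H}(g,h)=\delta_G(g)\delta_H(h)$ recalled in the paper: if $G$ is $r$-regular and $H$ is $s$-regular, then $G\times H$ is $rs$-regular. For the efficient open domination property, I would invoke the result from \cite{AbHamTay} cited just before the corollary, which guarantees that $G\times H$ is an efficient open domination graph whenever both factors are, and moreover that $D_G\times D_H$ is an efficient open dominating set of $G\times H$ whenever $D_G$ and $D_H$ are efficient open dominating sets of $G$ and $H$, respectively.

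With $G\times H$ now known to be regular and efficient open dominating, Theorem \ref{regular} applies to $G\times H$ itself, giving
\[
\gamma_{tR}(G\times H)=2\gamma_t(G\times H).
\]
The remaining step is to identify $\gamma_t(G\times H)$ with $\gamma_t(G)\gamma_t(H)$. For this I would use Remark \ref{remark-efficient-open-graph}: in any efficient open domination graph, every efficient open dominating set is a total dominating set of minimum size. Applied to $G$, $H$, and $G\times H$ this yields $|D_G|=\gamma_t(G)$, $|D_H|=\gamma_t(H)$, and $|D_G\times D_H|=\gamma_t(G\times H)$. Since $|D_G\times D_H|=|D_G|\,|D_H|$, we obtain $\gamma_t(G\times H)=\gamma_t(G)\gamma_t(H)$, and substituting into the previous display produces the desired equality $\gamma_{tR}(G\times H)=2\gamma_t(G)\gamma_t(H)$.

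I do not anticipate a genuine obstacle here, because both heavy lifts, namely that $G\times H$ inherits efficient open domination from the factors and that efficient open dominating sets coincide with minimum total dominating sets, are black-boxed from \cite{AbHamTay} and \cite{KuPeYe1}. The only care point is making sure the hypotheses of Theorem \ref{regular} are literally satisfied for the product, which amounts to the one-line regularity check above. The proof is therefore essentially a concatenation of Theorem \ref{regular} with the product-level identity $\gamma_t(G\times H)=\gamma_t(G)\gamma_t(H)$ derived from the efficient open dominating set $D_G\times D_H$.
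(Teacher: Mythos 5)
Your proposal is correct and follows exactly the route the paper intends: the product is regular by the degree formula, it inherits efficient open domination (with set $D_G\times D_H$) from \cite{AbHamTay}, Theorem \ref{regular} then gives $\gamma_{tR}(G\times H)=2\gamma_t(G\times H)$, and Remark \ref{remark-efficient-open-graph} identifies $\gamma_t(G\times H)=|D_G||D_H|=\gamma_t(G)\gamma_t(H)$. No gaps.
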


The relaxation of Corollary \ref{EOD2} and Theorem \ref{regular} without the condition of regular graphs is not true anymore as shown by $(ii)$ of Corollary \ref{exact}. Clearly $K_{1,s}$ and $K_{1,t}$ are efficient open domination graphs that are not regular and we have $\gamma_{tR}(K_{1,s}\times K_{1,t})=7\neq 8=2\gamma_t(K_{1,s})\gamma_t(K_{1,t})$.

A \emph{prism} $P_G$ over a graph $G$ is a graph obtained from two disjoint copies of the graph $G$ by adding a perfect matching between analogous vertices of each copy (or the Cartesian product $G\Box K_2$). All the prisms that are efficient open domination graphs are described in Theorem 4.3 from \cite{KuPeYe1}. One $3$-regular example is $P_{C_{3r}}$ and for them we have $\gamma_t(P_{C_{3r}})=2r$.

It is well known that a cycle $C_n$ contains an efficient open dominating set whenever $n$ is congruent with $0$ modulo $4$. Thus, the next result is clear by Corollary \ref{EOD2}.

\begin{corollary}\label{EOD3}
If $m$ and $n$ are positive integers divisible by $4$ and $t\geq 2$ and $r\geq 1$ are any integers, then
\begin{itemize}
\item[(i)] $\gamma_{tR}(C_m\times C_n)=\frac{mn}{2}$;
\item[(ii)] $\gamma_{tR}(C_m\times K_{t,t})=2m$;
\item[(iii)] $\gamma_{tR}(C_m\times P_{C_{3r}})=2mr$;
\item[(iv)] $\gamma_{tR}(K_{t,t}\times P_{C_{3r}})=8r$.
\end{itemize}
\end{corollary}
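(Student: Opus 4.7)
The plan is to observe that each of the four claimed equalities falls directly under the hypothesis of Corollary \ref{EOD2}, so the only work is to check, for each pair of graphs involved, that both factors are simultaneously regular and efficient open domination graphs, and then to substitute the known values of their total domination numbers.

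First, I would collect the three ingredients needed. (a) For $m\equiv 0\pmod 4$, the cycle $C_m$ is $2$-regular, and it is explicitly noted in the excerpt that $C_n$ is an efficient open domination graph exactly when $4\mid n$; moreover the standard formula $\gamma_t(C_m)=m/2$ holds in this case (a minimum total dominating set being given by an efficient open dominating set, by Remark \ref{remark-efficient-open-graph}). (b) The complete bipartite graph $K_{t,t}$ is $t$-regular, and picking one vertex from each part yields a two-element set $\{u,v\}$ with $N(u)=B$, $N(v)=A$, hence $N(u)\cap N(v)=\emptyset$ and $N(u)\cup N(v)=V(K_{t,t})$; this exhibits $K_{t,t}$ as an efficient open domination graph with $\gamma_t(K_{t,t})=2$. (c) The prism $P_{C_{3r}}$ is $3$-regular, and the excerpt states right before the corollary that it is an efficient open domination graph with $\gamma_t(P_{C_{3r}})=2r$ (as follows from Theorem 4.3 of \cite{KuPeYe1}).

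With these three facts in hand, the four items are immediate applications of Corollary \ref{EOD2}:
\begin{align*}
\gamma_{tR}(C_m\times C_n) &= 2\gamma_t(C_m)\gamma_t(C_n) = 2\cdot\tfrac{m}{2}\cdot\tfrac{n}{2} = \tfrac{mn}{2},\\
\gamma_{tR}(C_m\times K_{t,t}) &= 2\gamma_t(C_m)\gamma_t(K_{t,t}) = 2\cdot\tfrac{m}{2}\cdot 2 = 2m,\\
\gamma_{tR}(C_m\times P_{C_{3r}}) &= 2\gamma_t(C_m)\gamma_t(P_{C_{3r}}) = 2\cdot\tfrac{m}{2}\cdot 2r = 2mr,\\
\gamma_{tR}(K_{t,t}\times P_{C_{3r}}) &= 2\gamma_t(K_{t,t})\gamma_t(P_{C_{3r}}) = 2\cdot 2\cdot 2r = 8r.
\end{align*}

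There is no real obstacle here: the entire argument is a bookkeeping application of Corollary \ref{EOD2}. The only point that might need a line of comment is the verification that $K_{t,t}$ is an efficient open domination graph (since this is the one pair not already mentioned explicitly in the text around Corollary \ref{EOD2}), but this is immediate from the $\{u,v\}$ construction above. Everything else is a direct substitution of $\gamma_t$-values into the formula $2\gamma_t(G)\gamma_t(H)$.
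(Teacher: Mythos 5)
Your proof is correct and follows exactly the route the paper intends: the text derives this corollary directly from Corollary \ref{EOD2} after noting that $C_n$ is an efficient open domination graph precisely when $4\mid n$. You merely spell out the routine verifications (regularity, efficient open domination, and the $\gamma_t$-values for $C_m$, $K_{t,t}$ and $P_{C_{3r}}$) that the paper leaves implicit.
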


\end{document}